\newcommand{\T}{^\mathrm{T}}
\newcommand{\tr}{\mathrm{tr}}
\def\1{\bm{1}}
\newcommand{\Rb}{\mathbb{R}}
\newcommand{\Eb}{\mathbb{E}}
\newcommand{\ExP}[2]{\Eb_{{#1}}{\left[#2\right]}}
\def\vb{{\bm{b}}}
\def\vk{{\bm{k}}}
\def\vo{{\bm{o}}}
\def\vu{{\bm{u}}}
\def\vx{{\bm{x}}}
\newcommand*{\vK}{{\bf K}}
\newcommand*{\vU}{{\bf U}}
\newcommand*{\vX}{{\bf X}}
\DeclareMathAlphabet{\mathsfit}{\encodingdefault}{\sfdefault}{m}{sl}
\SetMathAlphabet{\mathsfit}{bold}{\encodingdefault}{\sfdefault}{bx}{n}
\DeclarePairedDelimiter\abs{\lvert}{\rvert}
\DeclarePairedDelimiter\norm{\lVert}{\rVert}
\newtheorem{theorem}{Theorem}
\newtheorem{assumption}{Assumption}
\newtheorem{lemma}{Lemma}
\newtheorem{definition}{Definition}
\providecommand{\customgenericname}{}
\newcommand{\newcustomtheorem}[2]{%
  \newenvironment{#1}[1]
  {%
   \renewcommand\customgenericname{#2}%
   \renewcommand\theinnercustomgeneric{##1}%
   \innercustomgeneric
  }
  {\endinnercustomgeneric}
}
\theoremstyle{remark}
\newcommand*\dif{\mathop{} \mathrm{d}}
\newcommand*\du{\dif u}
\newcommand*\vxbar{\bar{\vx}}
\newcommand*\vubar{\bar{\vu}}
\newcommand*\ui{u^{i}}
\newcommand*\Ui{U^{i}}
\newcommand*\dui{\dif u^{i}}
\newcommand*\vUs{\vU^{*}}
\newcommand*\feasi{\mathcal{U}^i}
\newcommand*\negi{\neg i}
\newcommand*\unegi{u^{\negi}}
\newcommand*\vunegi{{\vu}^{\negi}}
\newcommand*\vUnegi{{\vU}^{\negi}}
\newcommand*\pii{{\pi}^{i}}
\newcommand*\pinegi{{\pi}^{\negi}}
\newcommand*\piis{{\pi}^{i*}}
\newcommand*\pinegis{{\pi}^{\negi*}}
\newcommand*\vdx{\delta \vx}
\newcommand*\vdu{\delta \vu}
\newcommand*\vdus{{\delta \vu^*}}
\newcommand*\vdunegi{\delta {\vu^{\negi}}}
\newcommand*\vdunegis{\delta {\vu^{\negi*}}}
\newcommand*\vtildedu{\delta \tilde{\vu}}
\newcommand*\delu{\delta u}
\newcommand*\delui{\delta {u^i}}
\newcommand*\deluis{\delta {u^{i*}}}
\newcommand*\ddelui{\dif \delta {u^i}}
\newcommand*\Quui{{Q_{\ui\ui}^i}}
\newcommand*\nonego{{\neg 1}}
\newcommand*\ind{\mathbbm{1}}
\algrenewcommand\algorithmicindent{1.0em}%
\def\joinedsupplementary{1}
    \newcommand*{\addFileDependency}[1]{% argument=file name and extension
      \typeout{(#1)}
      \@addtofilelist{#1}
      \IfFileExists{#1}{}{\typeout{No file #1.}}
    }
    \newcommand*{\myexternaldocument}[1]{%
        \externaldocument{#1}%
        \addFileDependency{#1.tex}%
        \addFileDependency{#1.aux}%
    }
\begin{document}

% paper title
\title{Multimodal Maximum Entropy Dynamic Games}

% You will get a Paper-ID when submitting a pdf file to the conference system
% \author{Author Names Omitted for Anonymous Review. Paper-ID [70]}

% \author{\authorblockN{Oswin So}
% \authorblockA{Georgia Institute of Technology\\
% oswinso@gatech.edu}
% \and
% \authorblockN{Kyle Stachowicz}
% \authorblockA{Georgia Institute of Technology\\
% kwstach@gatech.edu}
% \and
% \authorblockN{Evangelos A. Theodorou}
% \authorblockA{Georgia Institute of Technology\\
% evangelos.theodorou@gatech.edu}}

% avoiding spaces at the end of the author lines is not a problem with
% conference papers because we don't use \thanks or \IEEEmembership

% for over three affiliations, or if they all won't fit within the width
% of the page, use this alternative format:
% 
\author{\authorblockN{Oswin So\authorrefmark{1},
Kyle Stachowicz
and Evangelos A. Theodorou%
\thanks{Toyota Research Institute provided funds to support this work.}
}
\authorblockA{Autonomous Control and Decision Systems Lab\\
Georgia Institute of Technology, Atlanta, Georgia\\
\authorrefmark{1}Correspondence to: \href{mailto:oswinsog@gatech.edu}{oswinso@gatech.edu}}}

\maketitle
% %%%%%%%%%%%%%%%%%%%%%%%%%%%%%% TODO: REMOVE PAGE NUMBERS %%%%%%%%%%%%%%%%%%%%%%%%%%%%%%%%%
% \thispagestyle{plain}
% \pagestyle{plain}
% %%%%%%%%%%%%%%%%%%%%%%%%%%%%%% TODO: REMOVE PAGE NUMBERS %%%%%%%%%%%%%%%%%%%%%%%%%%%%%%%%%

\begin{abstract}
Environments with multi-agent interactions often result a rich set of modalities of behavior between agents due to the inherent suboptimality of decision making processes when agents settle for satisfactory decisions.
However, existing algorithms for solving these dynamic games are strictly unimodal and fail to capture the intricate multimodal behaviors of the agents.
In this paper, we propose MMELQGames (Multimodal Maximum-Entropy Linear Quadratic Games), a novel constrained multimodal maximum entropy formulation of the Differential Dynamic Programming algorithm for solving generalized Nash equilibria.
By formulating the problem as a certain dynamic game with incomplete and asymmetric information where agents are uncertain about the cost and dynamics of the game itself,
the proposed method is able to reason about multiple local generalized Nash equilibria,
enforce constraints with the Augmented Lagrangian framework
and also perform Bayesian inference on the latent mode from past observations.
We assess the efficacy of the proposed algorithm on two illustrative examples: multi-agent collision avoidance and autonomous racing.
In particular, we show that only MMELQGames is able to
effectively block a rear vehicle when given a speed disadvantage and the rear vehicle can overtake from multiple positions.
% maintain the lead with a speed disadvantage when the rear vehicle can overtake from multiple positions.

\end{abstract}

\IEEEpeerreviewmaketitle

%%%%%%%%%%%%%%%%%%%%%%%%%%%%%%%%%%%%%%%%%%%%%%%%%%%%%%%%%%%%%%%%%%
%%%%%%%%%%%%%%%%%%%%%%%%%%%%%%%%%%%%%%%%%%%%%%%%%%%%%%%%%%%%%%%%%%
\section{Introduction}
\label{sec:intro}
Planning in multi-agent scenarios is a challenging task -- actions taken by any robot cannot be considered in isolation and must take the response from other agents into account. Classical approaches to tackling this problem adopt a predict-then-plan architecture, where the actions of other agents are assumed to be independent of the ego-agent's actions.
However, this assumption breaks down in the context of multi-agent games such as racing, surveillance or autonomous driving where agents may have conflicting interests and it is advantageous to manipulate the behavior of other agents.

Recent works have introduced game-theoretic formulations to address this problem
\cite{cleac2019algames, fridovich2020efficient, laine2021multi,  mehr2021maximum, schwarting2021stochastic, wang2019game, wang2020multi}.
By assuming that agents act rationally, the optimal actions of other agents can be predicted by solving for the Nash equilibria of the non-cooperative game assuming that the true objectives of other agents are known.
However, in practice humans face cognitive limitations and seek decisions that are satisfactory but often suboptimal, a phenomenon described by the concept of ``bounded rationality'' \cite{simon1972theories}.
% The same can be said of autonomous agents, where the local nature of many popular planning methods coupled with the computational time constraints means that solutions found in practice only correspond to approximate local minima.

This disconnect is often modeled using the the \ac{MaxEnt} framework which models actions of agents as being stochastic in nature.
\ac{MaxEnt} has been successfully applied to diverse areas including inverse reinforcement learning \cite{mehr2021maximum, ziebart2008maximum}, forecasting \cite{evans2021maximum}, and biology \cite{de2018introduction}.
In particular, \citet{mehr2021maximum} propose a game-theoretic maximum entropy algorithm for finding nash equilibria policies to dynamic games via an extension of the iLQGames method \cite{fridovich2020efficient}.

\begin{figure}[!t]
    \centering
    \parbox[b]{.49\columnwidth}{\centering Overtake Above}
    \parbox[b]{.49\columnwidth}{\centering Overtake Below}\\
    \vspace{.25em}
    \includegraphics[width=0.99\columnwidth ]{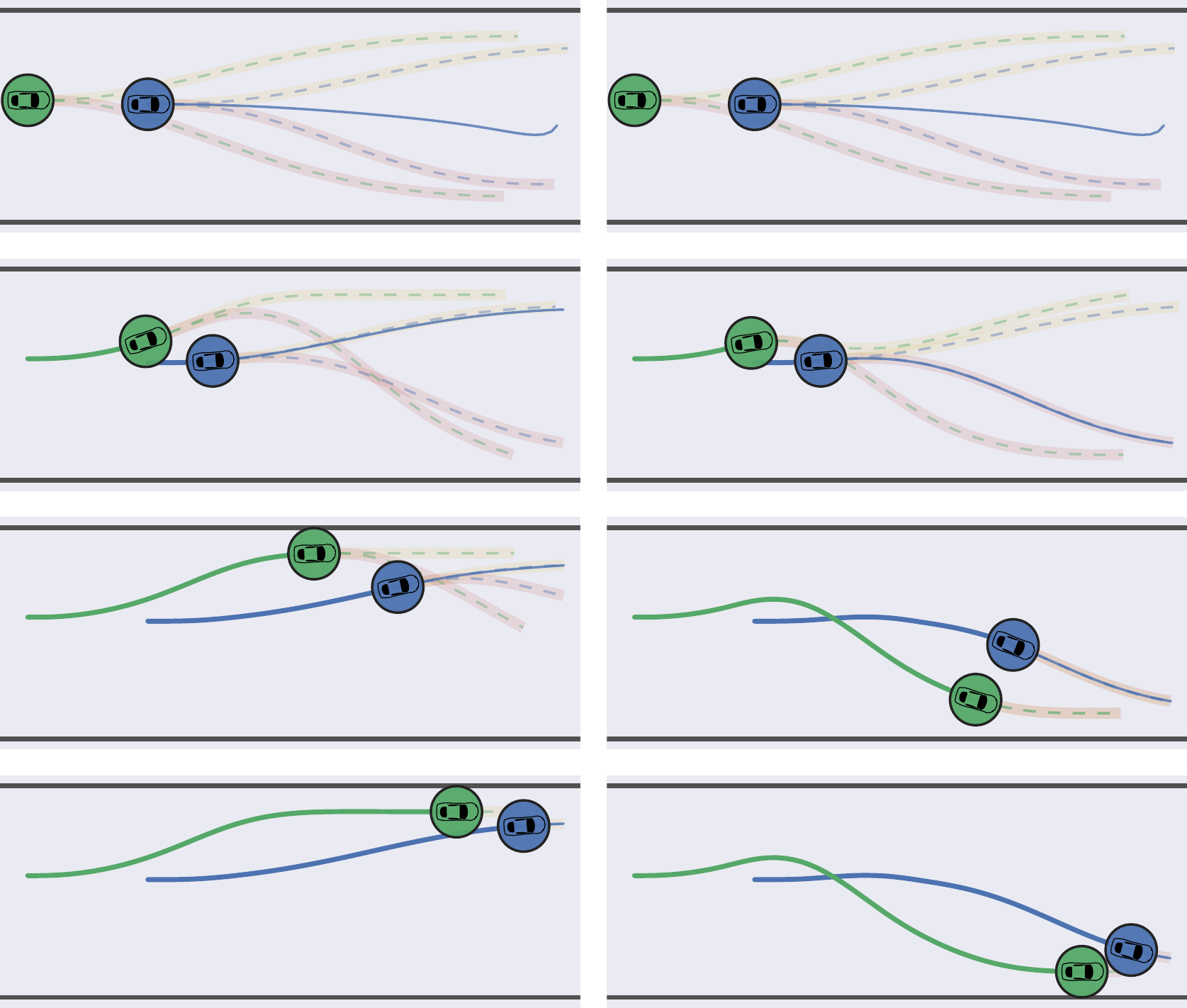}
    \caption{
    Two-agent autonomous racing scenario where the rear agent (green) has a higher maximum velocity than the lead agent (blue).
    % The rear agent chooses the upper mode in the \textbf{left} column and the lower mode in the \textbf{right} column.
    % The position of each agent at the time of planning is represented as a solid circle. Past trajectories shown with solid lines and the predicted trajectories shown with dotted lines.
    Different highlighted colors representing different planned modes.
    % , with a consistent color scheme across time chosen for better clarity.
    The planned trajectory for the ego agent is shown as a thin blue line.
    % By leveraging multimodal planning and Bayesian mode inference,
    The lead agent is able to reason about and correctly infer the different modes that the rear agent could take.
    Furthermore, when the ego agent is uncertain about which mode the rear agent will choose, it hedges against both possibilities by planning a path down the middle.
    % Note that the rest of the planned trajectory is found via an approximate solution to the POMDP that is only used for visual aid.
    }
    \label{fig:eyecandy}
\end{figure}
However, these methods consider only a single local minimum. As a result, the resulting policies are \textit{unimodal} and fail to capture the \textit{multimodal} nature of solutions under the \ac{MaxEnt} framework.
In the presence of multiple local minima, the true maximum-entropy policy will be multimodal, with one mode corresponding to each local minimum.
However, computing expectations over multimodal policies in multi-agent settings quickly leads to combinatorial explosion and is computationally intractable
when solving for multimodal Nash equilibria.
% As a result, predictions from unimodal approaches can become grossly inaccurate if the predictions fall in a different modes.

By considering a novel \ac{MaxEnt} dynamic game with incomplete information and information asymmetry,
we convert the previous challenge into that of solving a POMDP for the ego agent where the discrete latent variable corresponds to different local generalized \ac{MaxEnt} Nash equilibria found by running a constrained version of \ac{MELQGames} in parallel.
This approach further allows for Bayesian inference over the discrete latet variable by considering past observations of controls from non-ego agents.
% In this paper, we address these challenges by proposing a novel \ac{MaxEnt} dynamic game with incomplete information and information asymmetry that converts the Nash equilibria 
% problem into a POMDP over discrete latent modes.
% We then propose the \ac{MMELQGames} which solves for multiple local \ac{GNE} by
% running \ac{MELQGames} in parallel, performs Bayesian inference over the discrete latent mode with past observations and then solves a POMDP to obtain a policy for the ego
% agent that is able to reason about uncertainty over the different modes.
% Furthermore, given the importance of constraints in robotic applications,
% we propose a \textit{constrained} \ac{MaxEnt} dynamic game and extend our approach to handle inequality constraints via the use of the Augmented Lagrangian framework.

The main contributions of our work are threefold:
\begin{itemize}
    % \item We propose the constrained \ac{MaxEnt} dynamic game setting to handle bounded rationality in decision making under inequality constraints. Furthermore, we extend the \ac{MELQGames} algorithm to solve for \ac{GNE} via use of the Augmented Lagrangian framework and show its convergence.
    \item We propose the constrained \ac{MaxEnt} dynamic game setting to handle bounded rationality in decison making under inequality constraints and extend \ac{MELQGames} to solve generalized Nash equilibrium.
    % Furthermore, we propose the \ac{MMELQGames} algorithm for identifying multiple generalized Nash equilibria.
    % \item By considering a novel \ac{MaxEnt} dynamic game with incomplete information and information asymmetry, we provide a computationally tractable method of handling multimodal behavior by combining multiple local Nash equilibria and converting the multimodal Nash equilibria problem into a POMDP for the ego agent.
    \item We consider a novel \ac{MaxEnt} dynamic game with incomplete and asymmetric information. We provide a a computationally efficient solution that allows for Bayesian inference of the latent mode when the underlying control policies are multimodal in nature. 
    % We propose a computationally tractable method for handling multimodal behavior in dynamic games by considering a game of incomplete information with information asymmetry to combine multiple local Nash equilibria.
    % We also show how this approach allows for Baysian inference of the latent mode by using the controls of non-ego agents as observations.
    \item We showcase the benefits of the proposed algorithm in simulation, including an autonomous racing example. The results demonstrate the superiority of \ac{MMELQGames} against other game theoretic formulations in terms of successfully predicting agents behavior in dynamic settings. 
    % where the additional multimodality results in better performance due to more accurately predicting the actions of other agents.
\end{itemize}
% This paper is organized as follows:
% in \Cref{sec:problem_setup}, we introduce the constrained \ac{MaxEnt} dynamic game and the corresponding Generalized Nash Equilibria.
% In \Cref{sec:algos}, we derive the unimodal \ac{MELQGames} algorithm and
% propose its Augmented Lagrangian extension,
% Then, in \Cref{sec:multimodality}, we propose the \ac{MMELQGames} algorithm as a
% \textit{multimodal} extension to the unimodal \ac{MELQGames}
% and show how the multimodality of \ac{MMELQGames} can be used for Bayesian inference of the latent mode.
% In \Cref{sec:gt_mpc}, we propose a novel MPC algorithm combining the \ac{MMELQGames} algorithm and inference algorithm from \Cref{subsec:bayesian_inference}.
% The connections to related works are discussed in \Cref{sec:connections}.
% Finally, we show simulation results in \cref{sec:simulation} and conclude the paper in \cref{sec:conclusion}.

%%%%%%%%%%%%%%%%%%%%%%%%%%%%%%%%%%%%%%%%%%%%%%%%%%%%%%%%%%%%%%%%%%
%%%%%%%%%%%%%%%%%%%%%%%%%%%%%%%%%%%%%%%%%%%%%%%%%%%%%%%%%%%%%%%%%%
\section{Problem Setup}
\label{sec:problem_setup}
In this section, we introduce the constrained \ac{MaxEnt} dynamic game and the corresponding \ac{GNEP}.

%%%%%%%%%%%%%%%%%%%%%%%%%%%%%%%%%%%%%%%%%%%%%%%%%%%%%%%%%%%%%%%%%%
\subsection{Discrete Dynamic Games and Generalized Nash Equilibria}
We consider a discrete dynamic game with $N$ players with joint controls
$\vu_t = [u_t^1, \dots, u_t^N] = [u^i_t, \vunegi_t]\in \Rb^{n_\vu}$,
where $u^i_t$ denotes the control input of player $i$ and
we use $\vunegi_t$ to denote the controls excluding the $i$th agent.
We denote $\vx_t \in \Rb^{n_\vx}$ the joint state of the system at timestep $t$ which evolves under the discrete-time dynamics
\begin{equation} \label{eq:pf:dyn}
    \vx_{t+1} = f(\vx_t, u^1_t, \dots, u^N_t) = f(\vx_t, \vu_t).
\end{equation}
Each agent's objective is to minimize a corresponding cost function $J^i$ in finite-horizon $T$
with running cost $l^i$ and terminal cost $\Phi^i$, where $\vU = [\vu_1, \dots, \vu_{T-1}]$ denotes a
trajectory
\begin{equation} \label{eq:pf:obj_fn}
    J^i(\vU) = \Phi^i(\vx_T) + \sum_{t=1}^{T-1} l^i(\vx_t, \vu_t).
\end{equation}
The goal of agent $i$ is to choose controls $\Ui$ that optimize $J^i$ while respecting inequality constraints $h^i$:
\begin{equation} \label{eq:pf:opt}
\begin{aligned}
    \min_{u^i} \quad J^i(\vU),
    \qquad \textrm{s.t.} \quad  h^i(\vU) \leq 0.
\end{aligned}
\end{equation}
Note that both the objective function and the constraints are a function of $U^i$ and $\vUnegi$.
Due to the presence of constraints $h^i$ which may couple agent $i$'s feasible control set to other agent's controls $\vUnegi$, \eqref{eq:pf:opt} is a \ac{GNEP} \cite{facchinei2010generalized}.
Let
\begin{equation}
  \feasi(\vUnegi) = \{ \Ui | h^i(\vU) \leq 0 \},
\end{equation}
denote the feasible control set for agent $i$ given the controls of other agents $\vUnegi$.
Solving an (open loop) \ac{GNEP} amounts to finding controls $\vUs$ such that
for each agent $i$,
\begin{equation} \label{eq:pf:gnep}
    J^i(U^{i*}, \vU^{\negi*}) \leq J^i(\Ui, \vU^{\negi*}),
    \quad \forall \,\Ui \in \feasi(\vUnegi).
\end{equation}
However, since solving for the global \ac{GNEP} is in general intractable, we instead look for local \ac{GNEP} where \eqref{eq:pf:gnep} is satisfied for all $\Ui$ within a local neighborhood of the optimal $\vUs$.

%%%%%%%%%%%%%%%%%%%%%%%%%%%%%%%%%%%%%%%%%%%%%%%%%%%%%%%%%%%%%%%%%%%
\subsection{Maximum Entropy Dynamic Games}
Although the (global) Nash equilibria is a powerful concept, achieving Nash equilibria requires the assumption that all cost functions are known exactly and that each agent acts rationally by solving the \ac{GNEP} exactly.
However, often times we only have an approximation of the cost function
for the non-ego agents and other agents may only act with approximate solutions of the \ac{GNEP}.
To take these stochasticities into account, we take a \textit{relaxed control} approach and
consider a stochastic control policy $\pi^i(\ui | \vx)$ with the same deterministic dynamics as in \eqref{eq:pf:dyn}.
Let $\ExP{p}{\cdot}$ denote the expectation with respect to a distribution $p$.
We introduce an entropy term to the original objective \eqref{eq:pf:obj_fn} and consider the expected cost under all agent's policies $\pi^i$:
\begin{equation}
    J^i(\pi^i) = \ExP{\pi}{\Phi^i(\vx_T) + \sum_{t=1}^{T-1} \left( l^i(\vx_t, \vu_t) - \alpha H[\pi^i(\cdot | \vx_t)] \right)},
\end{equation}
where $\alpha > 0$ is a temperature term 
and $H[\pi]$ is the Shannon entropy of $\pi$ defined as
\begin{equation}
    H[\pi] = -\ExP{\pi}{\log \pi} = -\int \pi(u) \log \pi(u) \du.
\end{equation}
The resulting \ac{GNEP} is then formulated similarly to the deterministic case, except that we only require that the constraints $h^i$ hold for the mean controls:
\begin{equation} \label{eq:pf_medg:gnep}
\begin{aligned}
    \min_{\pi^i} \quad & J^i(\vU),
    \qquad \textrm{s.t.} \quad h^i(\ExP{\pi}{\vU}) \leq 0.
\end{aligned} 
\end{equation}
With \eqref{eq:pf_medg:gnep}, the task is now to find a stochastic policy $\pi^{i*}$ such that the following holds for all $\pi^i$ within a neighborhood of $\pi^{i*}$:
\begin{equation} \label{eq:pf:gne_nec}
    J^i(\pi^{i*}, \pi^{\negi*}) \leq J^i(\pi^i, \pi^{\negi*}), \quad \pi^i \in \Pi(\pi^{\negi*}),
\end{equation}
where $\Pi(\pi^{\negi*}) \coloneqq \{ \pi^i | h^i(\ExP{\pi}{\vU}) \leq 0 \}$.

%%%%%%%%%%%%%%%%%%%%%%%%%%%%%%%%%%%%%%%%%%%%%%%%%%%%%%%%%%%%%%
%%%%%%%%%%%%%%%%%%%%%%%%%%%%%%%%%%%%%%%%%%%%%%%%%%%%%%%%%%%%%%
\section{Algorithms for Solving Maximum Entropy Dynamic Games}
\label{sec:algos}
In this section, we now derive algorithms for solving the MaxEnt Dynamics Games introduced in the previous section.

%%%%%%%%%%%%%%%%%%%%%%%%%%%%%%%%%%%%%%%%%%%%%%%%%%%%%%%%%%%
\subsection{Unconstrained Dynamic Programming}
\label{sec:me:dp}
We first consider the unconstrained case for simplicity.
Defining the value function for agent $i$ given the policies of other agents $\pinegi$ to be
\begin{equation}
    V^i(\vx) = \inf_{\pii} \left\{ J^i(\vx, \pii, \pinegi) \right\},
\end{equation}
applying dynamic programming results in Bellman's equation:
\begin{equation} \label{eq:dp:raw_bellman}
    V^i(\vx) = \inf_{\pi^i} \Big\{
        \ExP{\pi}{ l^i(\vx, \vu) + {V^i}'( f(\vx, \vu) } - \alpha H[\pi^i(\cdot | \vx)] \Big\}.
\end{equation}
In \eqref{eq:dp:raw_bellman} and below we omit the time index $t$ for nonterminal times for simplicity and use ${V^i}'(f(\vx, \vu))$ to denote the value function for agent $i$ at the next timestep.

It is well known that the optimal policy $\piis$ that solves the infimum in \eqref{eq:dp:raw_bellman} is the Gibbs distribution \cite{wang2020variational, kim2020hamilton}
\begin{equation} \label{eq:dp:opt_pol}
    \piis( \ui | \vx ) = \frac{1}{Z^i} \exp\Big( -\frac{1}{\alpha} \ExP{\pinegi}{ {V^i}'(f(\vx,\vu)) + l^i(\vx, \vu) } \Big),
\end{equation}
where $Z^i$ denotes the partition function
\begin{equation} \label{eq:dp:Z_def}
    Z^i \coloneqq \int \exp\Big( -\frac{1}{\alpha}
    \ExP{\pinegi}{ {V^i}'(f(\vx,\vu)) + l^i(\vx, \vu) } \Big) \dui.
\end{equation}
 Although we have obtained a closed-form expression for $\piis$ in \eqref{eq:dp:opt_pol},
it is defined in terms of $\pinegi$ which is unknown.
Hence, unlike the optimal control case, we must solve a system of equations for each agent to find $\piis$ and $\pinegis$.

% Substituting the optimal policy \eqref{eq:dp:opt_pol} into the
% Bellman equation \eqref{eq:dp:raw_bellman} and simplifying
% then gives us the relationship between the value function $V^i$ and the partition function $Z^i$:
% \begin{equation} \label{eq:dp:value_partition_function}
%     V^i(\vx) = -\alpha \ln Z^i(\vx).
% \end{equation}

%%%%%%%%%%%%%%%%%%%%%%%%%%%%%%%%%%%%%%%%%%%%%%%%%%%%%%%%%%%
\subsection{Unconstrained MELQGames}
\label{sec:me}
In this section, we propose using DDP to solve for Nash Equilibria of MaxEnt dynamic games
and derive the unconstrained MELQGames algorithm similar to iLQGames in \cite{fridovich2020efficient}.
For notational simplicity, we will drop the second-order approximation of the dynamics as in \ac{iLQR} in our description of \ac{DDP}. The dropped second-order dynamics terms can easily be added
back in the derivations below. We refer readers to \cite{mayne1966second, li2004iterative, fridovich2020efficient} for a detailed overview of the vanilla \ac{DDP}, \ac{iLQR} and iLQGames algorithms.

The \ac{DDP} algorithm consists of a forward pass and a backward pass.
The forward pass simulates the dynamics forward in time obtaining a set of 
nominal state and control trajectories $(\bar{\vx}_{0:T}, \bar{\vu}_{0:T-1})$,
while the backward pass solves the Bellman equation with a 2nd order approximation
of the costs and dynamics equations around the nominal trajectories.
The boundary conditions for the value functions $V^i$ for each agent are
obtained by performing a 2nd order Taylor expansion of the terminal costs $\Phi^i$:
\begin{equation} \label{eq:me:term_quad}
\begin{aligned}
    V^i_{\vx\vx,T} &= \Phi^i_{\vx \vx}, & V^i_{\vx, T} &= \Phi_{\vx}^i, & V_T= &= \Phi^i,
\end{aligned}
\end{equation}
where we follow the notation in \ac{DDP} literature by denoting partial derivatives via subscripts.

Additionally, we perform a quadratic approximation of the costs and
linear approximation of the dynamics around a nominal trajectory $(\bar{\vX}, \bar{\vU})$
, where $\vX = [\vx_0, \dots, \vx_T]$ denotes a trajectory of states
\begin{align} 
    l^i(\vx, \vu)
    &\approx l^i
    + \begin{bmatrix}l^i_\vx \\ l^i_\vu\end{bmatrix}\T
      \begin{bmatrix}\vdx \\ \vdu\end{bmatrix}
    + \frac{1}{2}
      \begin{bmatrix}\vdx \\ \vdu\end{bmatrix}\T
      \begin{bmatrix}l^i_{\vx\vx} & l^i_{\vx \vu} \\ l^i_{\vu \vx} & l^i_{\vu \vu} \end{bmatrix}
      \begin{bmatrix}\vdx \\ \vdu\end{bmatrix}, \nonumber        \\
    f(\vx, \vu)
    &\approx f   + f_\vx \vdx + f_\vu \vdu,%
    \label{eq:me:quad_cost_lin_dyn}
\end{align}
where we have defined $\vdx = \vx - \vxbar$ and $\vdu = \vu - \vubar$.

Let $Q^i$ denote the terms inside the expectation in Bellman's equation \eqref{eq:dp:raw_bellman}
\begin{equation}
    Q^i(\vx, \vu) \coloneqq l^i(\vx, \vu) + {V^i}'( f(\vx, \vu) ).
\end{equation}
Then, $Q^i$ for the approximated system is quadratic:
\begin{gather}
    Q^i(\vx, \vu) = \bar{V}^i + \delta Q^i, \\
    \delta Q^i \coloneqq
    \begin{bmatrix}Q^i_\vx \\ Q^i_\vu\end{bmatrix}\T \begin{bmatrix}\vdx \\ \vdu\end{bmatrix}
    + \frac{1}{2}
      \begin{bmatrix}\vdx \\ \vdu\end{bmatrix}\T
      \begin{bmatrix}Q^i_{\vx\vx} & Q^i_{\vx\vu} \\ Q^i_{\vu\vx} & Q^i_{\vu\vu} \end{bmatrix}
      \begin{bmatrix}\vdx \\ \vdu\end{bmatrix}.
\end{gather}
with $\bar{V}^i \coloneqq l^i(\vxbar, \vubar) + {V^i}'(f(\vx, \vu))$.
The derivation and full expressions for the partial derivatives of $Q^i$
are included in \cref{sm:Q_derivs}.
Performing a change of variables $\vdu = \vu - \vubar$
on $\pii$ and taking out terms that not functions of $\delui$, the infimum in the Bellman equation \eqref{eq:dp:raw_bellman} simplifies to
\begin{align}
    &\inf_{\pii} \int \pii(\delui) \bigg\{
        \Big( Q_{\ui}^i + Q_{\ui \vx}^i \vdx
    + Q_{\ui \unegi }^i \ExP{\pinegi}{\vdunegi} \Big)\T \delui \nonumber \\
    &\qquad + \frac{1}{2}\delui\T Q_{u^i u^i}^i \delui + \alpha \log \pii(\delui)
    \bigg\} \ddelui, \label{eq:me:quad_bellman}
\end{align}
Since the expression above is now quadratic in $\delui$ for each agent $i$, 
the coupled systems of equations can be solved as shown in the following lemma.
\begin{lemma}[Optimal MELQGames Policy] \label{thm:me:opt_pol}
The optimal policy $\piis$ (in the Nash equilibria sense) which solves \eqref{eq:me:quad_bellman} for each agent $i$ has the form
\begin{equation} \label{eq:me:opt_pol}
    \pi^{i*} =
    {Z^i}^{-1} \exp\Big(-\frac{\alpha}{2} (\delui - \deluis)\T Q_{\ui\ui}^i (\delui - \deluis) \Big),
\end{equation}
where $\deluis$ is the solution to the following system of equations
\begin{equation} \label{eq:me:opt_pol_coupled_eq}
    0 = \hat{Q}_{\vu\vu} \vdus + Q_{\vu\vx} \vdx + Q_{\vu}, \quad \vdus = \vk + \vK \vdx,
\end{equation}
and the matrix $\hat{Q}_{\vu\vu} \in \Rb^{n_\vu \times n_\vu}$ is obtained by vertically stacking
the row vectors $Q^i_{\ui \vu}$ for each $i \in \{ 1, \dots, N \}$:
\begin{equation}
    (\hat{Q}_{\vu\vu})\T \coloneqq
    \begin{bmatrix}
    (Q^1_{u^1 \vu})\T &
    \hdots &
    (Q^N_{u^N \vu})\T
    \end{bmatrix}\T.
\end{equation}
\end{lemma}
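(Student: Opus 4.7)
The plan is to apply the closed-form maximum-entropy optimal policy \eqref{eq:dp:opt_pol} to the quadratic approximation $Q^i$, exploit the resulting Gaussianity of each $\piis$ to evaluate the expectations with respect to $\pinegi$ in closed form, and then collapse the Nash fixed-point across agents into a single linear system whose solution is the stated affine feedback law.

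First I would substitute the quadratic form of $Q^i$ into the exponent of \eqref{eq:dp:opt_pol} and split $\vdu = [\delui;\, \vdunegi]$. Linearity of expectation passes $\ExP{\pinegi}{\cdot}$ through the quadratic, replacing each factor of $\vdunegi$ by $\ExP{\pinegi}{\vdunegi}$; any term independent of $\delui$ is absorbed into $Z^i$. What remains in the exponent is a quadratic form in $\delui$ with quadratic coefficient (a multiple of) $Q^i_{\ui\ui}$ and linear coefficient (a multiple of) $Q^i_{\ui} + Q^i_{\ui\vx}\vdx + Q^i_{\ui\unegi}\ExP{\pinegi}{\vdunegi}$. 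Completing the square reproduces the Gaussian form \eqref{eq:me:opt_pol} and identifies the mean $\deluis$ as the solution of the stationarity condition
\begin{equation*}
Q^i_{\ui\ui}\,\deluis + Q^i_{\ui\vx}\,\vdx + Q^i_{\ui} + Q^i_{\ui\unegi}\,\ExP{\pinegi}{\vdunegi} = 0.
\end{equation*}

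Next I would close the Nash coupling. Since each $\pi^{j*}$ has just been shown to be Gaussian with mean $\delta u^{j*}$, we have $\ExP{\pinegi}{\vdunegi} = \vdunegis$, so the stationarity condition becomes linear in $\vdus$. Observing that $[\,Q^i_{\ui\ui},\, Q^i_{\ui\unegi}\,] = Q^i_{\ui\vu}$, the per-agent condition reads $Q^i_{\ui\vu}\,\vdus + Q^i_{\ui\vx}\,\vdx + Q^i_{\ui} = 0$. Vertically stacking these $N$ equations yields exactly the block-coupled system $\hat{Q}_{\vu\vu}\,\vdus + Q_{\vu\vx}\,\vdx + Q_\vu = 0$ of \eqref{eq:me:opt_pol_coupled_eq}, with $\hat{Q}_{\vu\vu}$ built row-wise from $Q^i_{\ui\vu}$ as in the lemma. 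Because the right-hand side is affine in $\vdx$, inverting $\hat{Q}_{\vu\vu}$ delivers $\vdus = \vk + \vK\,\vdx$ with $\vk = -\hat{Q}_{\vu\vu}^{-1} Q_\vu$ and $\vK = -\hat{Q}_{\vu\vu}^{-1} Q_{\vu\vx}$.

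The main obstacle is the apparent circularity in the definition of $\piis$: each optimal policy depends on $\ExP{\pinegi}{\vdunegi}$, which in turn depends on the other $\pi^{j*}$. The key observation that unblocks the argument is that the Gaussian form of $\piis$ holds regardless of what value $\ExP{\pinegi}{\vdunegi}$ takes — that expectation only shifts the mean, not the covariance. This reduces the infinite-dimensional policy fixed-point problem to a finite-dimensional linear system in the stacked means $\vdus$ that admits a clean closed-form solution. A minor bookkeeping step is checking that row-stacking $Q^i_{\ui\vu}$ for $i=1,\dots,N$ produces an $n_\vu\times n_\vu$ matrix, which follows from the block partitioning of $\vu$ across agents, and that invertibility of $\hat{Q}_{\vu\vu}$ — the Nash analog of the usual LQR positive-definiteness requirement — holds along the nominal trajectory.
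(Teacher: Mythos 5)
Your proposal is correct and follows essentially the same route as the paper's proof: invoke the Gibbs-distribution form of the MaxEnt-optimal policy, complete the square in $\delui$ to read off the Gaussian with covariance $\alpha (Q^i_{\ui\ui})^{-1}$ and mean given by the stationarity condition, then note that $[\,Q^i_{\ui\ui},\,Q^i_{\ui\unegi}\,]=Q^i_{\ui\vu}$ and stack the $N$ per-agent conditions into the linear system $\hat{Q}_{\vu\vu}\vdus + Q_{\vu\vx}\vdx + Q_\vu = 0$. Your explicit remarks on why the circularity is benign (the expectation over $\pinegi$ shifts only the mean, not the covariance) and on the invertibility of $\hat{Q}_{\vu\vu}$ are sensible additions that the paper leaves implicit.
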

We defer the proof of \cref{thm:me:opt_pol} to the appendix in \cref{sm:proof:thm:me:opt_pol}.
Note that the optimal policy $\pi^{i*}$ is Gaussian distribution with mean $\deluis$ and covariance matrix $\Sigma^i = \alpha (\Quui)^{-1}$, where $\deluis$ has the same expression as in the iLQGames case.
Additionally, as $\alpha \to 0$, $\pi^{i*}$ converges to the delta distribution centered on $\delui^*$.
\begin{figure}[t]
    \centering
    \includegraphics[width=0.7\linewidth]{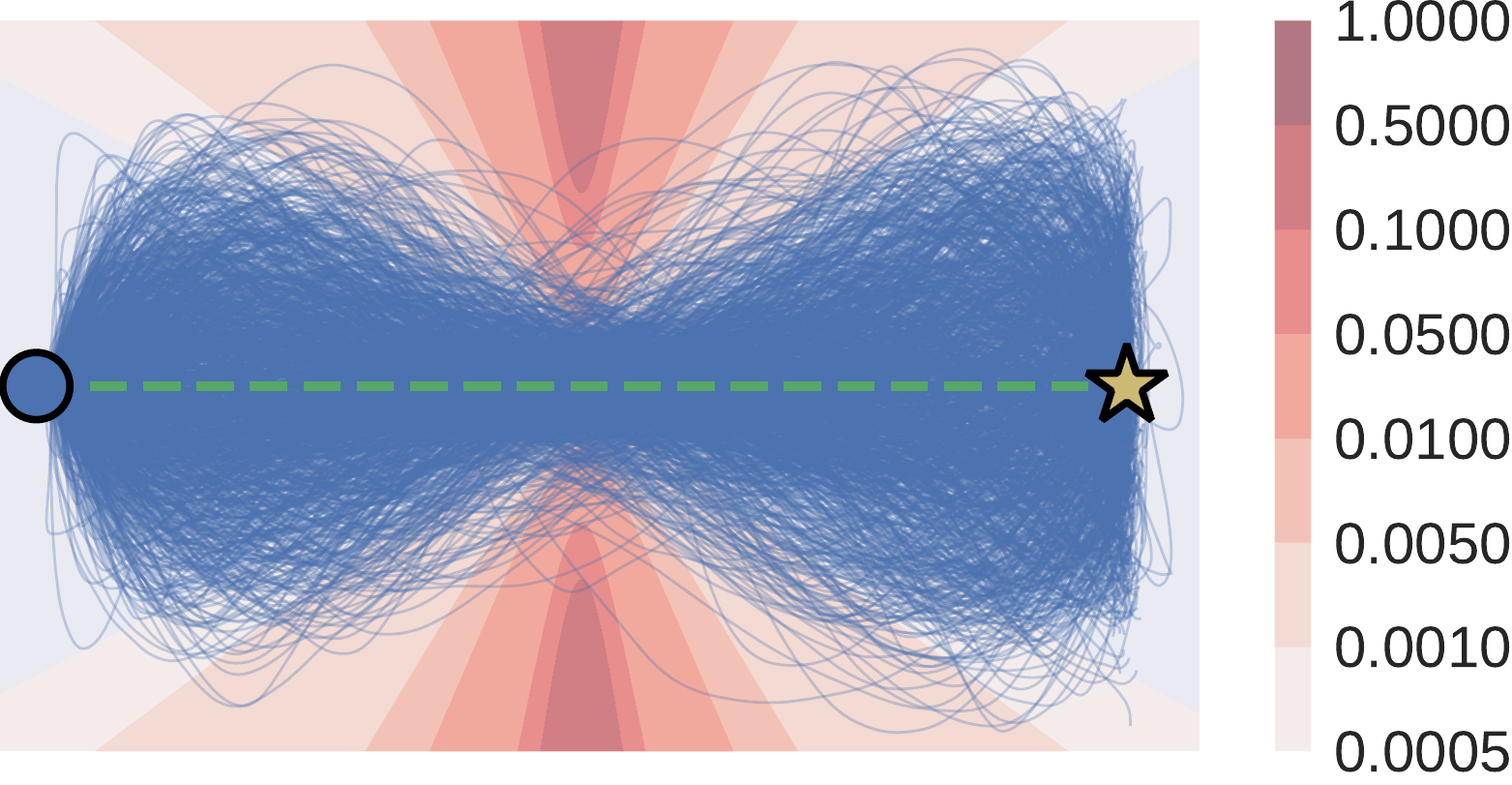}
    \caption{
    Sampled trajectories from the stochastic policy obtained from solving the
    \ac{MaxEnt} Nash equilibria \eqref{eq:me:opt_pol} for a single agent.
    The covariance of the MaxEnt policy reflects the curvature of the cost landscape
    with a tighter distribution of position in the middle than at the ends.
    % for a single agent with triple integrator dynamics starting from the circle with the target position shown as a yellow star. The running cost,
    % shown in the contour plot, ``squeezes'' the y coordinate of the agent in the
    % middle and the terminal quadratic cost is higher along the direction
    % of the x-axis.
    % \todo{Be explicit about relationship between covariance and cost landscape.}
    % The sampled trajectories reflect this as the distribution of positions is much tighter in the
    % middle than at the ends.
    }
    \label{fig:squeeze_y}
\end{figure}
Information from the value function is incorporated in the policy in both the mean and the covariance
as the covariance is higher along eigenvectors of $\Quui$ which have small eigenvalues.
\cref{fig:squeeze_y} illustrates sampled trajectories from 
the optimal \ac{MaxEnt} Nash equilibria for a single agent with triple integrator dynamics.
The trajectories have a tighter distribution in the middle where
the cost function, shown in the contour plot, has a much higher curvature.
Also, the terminal quadratic cost is higher along the direction of the x-axis,
causing the trajectories near the target (yellow star) to have a lower 
variance along the x-axis.

We next substitute equation \eqref{eq:me:opt_pol} from \cref{thm:me:opt_pol} into the Bellman equation \eqref{eq:me:quad_bellman} to derive the update equations for the value function, shown in the following lemma.
\begin{lemma}[MELQGames Value Function Update] \label{thm:me:val_fn}
Suppose the infimum in the Bellman equation \eqref{eq:dp:raw_bellman}
is solved with policy $\pi^{i*}$ with mean $\vdu = \vk + \vK \vdx$ according
to \cref{thm:me:opt_pol}.
Then, the value function using for agent $i$ has the form
\begin{align} 
    V^i(\vx)
    &= \ExP{\vdu \sim \tilde{\pi}}{
        l^i(\vx, \vu) + {V^i}'( f(\vx, \vu) 
    } - \alpha H[\tilde{\pi}^i], \\
    &=\Big( V^i + V_H^i \Big) + {V_\vx^i}\T \vdx + \frac{1}{2} \vdx\T V_{\vx\vx}^i \vdx ,\label{eq:me:val_fn_update}
\end{align}
where the terms $V^i$, $V_{H}^i$, $V_x^i$ and $V_{xx}^i$ have the form
\begin{align}
    V^i
        &= \bar{V}^i + Q_u^i \vk + \frac{1}{2} \vk\T Q_{\vu\vu}^i \vk
        \label{eq:me:dV}, \\
\begin{split}
    V_H^i &= 
        \frac{\alpha}{2} \Big( \log \abs{ Q_{\ui\ui}^i } - n_{\ui} \log (2\pi \alpha) \Big) \\
        &\quad + \alpha \sum_{j=1, j \not= i}^N
            \tr\left[ (Q^j_{u^j, u^j})^{-1} Q^i_{u^j, u^j} \right],
\end{split} \label{eq:me:dVH} \\
    V_\vx^i
        &= Q_\vx^i + \vK\T Q_{\vu\vu}^i \vk + \vK\T Q_\vu^i + Q_{\vx\vu}^i \vk, \label{eq:me:Vx} \\
    V_{\vx\vx}^i
        &= Q_{\vx\vx}^i + \vK\T Q_{\vu\vu}^i \vK + \vK\T Q_{\vu\vx}^i + Q_{\vx\vu}^i \vK. \label{eq:me:Vxx}
\end{align}
\end{lemma}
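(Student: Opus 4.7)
The plan is to substitute the optimal policy $\pi^{i*}$ from \cref{thm:me:opt_pol} into the Bellman equation \eqref{eq:dp:raw_bellman} after the quadratic approximation \eqref{eq:me:quad_cost_lin_dyn}, evaluate the resulting Gaussian expectation in closed form, and then rewrite everything in powers of $\vdx$ to read off the four quantities $V^i$, $V_H^i$, $V_\vx^i$, and $V_{\vx\vx}^i$. Since each marginal $\tilde{\pi}^{i*}$ is Gaussian with mean $\delta u^{i*}$ and covariance $\alpha(Q^i_{u^i u^i})^{-1}$, and the agents sample independently, the joint distribution over $\vdu$ is a product Gaussian with stacked mean $\vdus$ and block-diagonal covariance $\Sigma = \mathrm{blockdiag}\bigl(\alpha(Q^j_{u^j u^j})^{-1}\bigr)_{j=1}^N$. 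Using this structure and the identity $Q^i = \bar V^i + \delta Q^i$, the problem reduces to evaluating two standard integrals.

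First, I would apply the standard expectation-of-a-quadratic-form formula
\begin{equation*}
    \ExP{\vdu \sim \calN(\vdus, \Sigma)}{\delta Q^i}
    = \delta Q^i\big|_{\vdu = \vdus} + \tfrac{1}{2}\tr\!\bigl(Q^i_{\vu\vu}\Sigma\bigr),
\end{equation*}
so that the stochastic part of the expected cost contributes a deterministic quadratic in $(\vdx,\vdus)$ plus a trace correction. Because $\Sigma$ is block-diagonal, the trace correction expands as $\alpha\sum_{j}\tr\!\bigl(Q^i_{u^j u^j}(Q^j_{u^j u^j})^{-1}\bigr)$, and the $j = i$ summand simplifies to the constant $\alpha\, n_{u^i}$ via $Q^i_{u^i u^i}(Q^i_{u^i u^i})^{-1} = I$, leaving the off-diagonal trace terms that appear in \eqref{eq:me:dVH}. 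Second, I would use the Gaussian differential entropy formula $H[\tilde\pi^{i*}] = \tfrac{1}{2}\log\det\!\bigl(2\pi e\,\alpha (Q^i_{u^i u^i})^{-1}\bigr)$ to evaluate the $-\alpha H[\pi^i]$ contribution, producing the $\log|Q^i_{u^i u^i}|$ and $n_{u^i}\log(2\pi\alpha)$ terms of $V_H^i$; the additive constant from $e$ in the entropy cancels the $\alpha n_{u^i}$ constant arising from the $j = i$ trace term.

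With the expectation and entropy evaluated, substituting $\vdus = \vk + \vK \vdx$ into the result gives a closed-form polynomial in $\vdx$. I would then group terms by degree: all pieces independent of $\vdx$ collect into $V^i + V_H^i$, yielding $V^i = \bar V^i + Q^i_\vu \vk + \tfrac{1}{2}\vk^\T Q^i_{\vu\vu}\vk$ together with the trace and log-determinant constants in $V_H^i$; the $\vdx$-linear terms assemble to $(V_\vx^i)^\T\vdx$ with the four contributions in \eqref{eq:me:Vx}; and the $\vdx$-quadratic pieces produce the symmetrized expression in \eqref{eq:me:Vxx}, where the sum $Q^i_{\vx\vu}\vK + \vK^\T Q^i_{\vu\vx}$ arises from the cross term $\vdx^\T Q^i_{\vx\vu} \vK \vdx$ being rewritten as half its symmetrization.

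The main obstacle I anticipate is careful bookkeeping around the coupling between agents: the value function for agent $i$ depends only on its own entropy $-\alpha H[\pi^i]$, yet the expectation of $\delta Q^i$ depends on the covariances of \emph{all} agents through $\Sigma$, producing the off-diagonal trace terms $\tr[(Q^j_{u^j u^j})^{-1} Q^i_{u^j u^j}]$ for $j\neq i$ that distinguish MELQGames from the single-agent MaxEnt LQR case. Keeping track of which constants come from the $j=i$ diagonal block of the trace, which cancel against the $\log e$ factor from the Gaussian entropy, and which remain in $V_H^i$ requires attention but is otherwise mechanical. The linear and quadratic collection in $\vdx$ is then routine provided one exploits symmetry of the cost Hessian $Q^i_{\vx\vu} = (Q^i_{\vu\vx})^\T$ to match the symmetrized form stated in the lemma.
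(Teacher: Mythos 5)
Your proof is correct in substance but takes a genuinely different route from the paper's. The paper never evaluates $\ExP{\pi}{Q^i} - \alpha H[\pi^i]$ directly; instead it computes the partition function $Z^i$ by completing the square in $\delui$, reads off $V^i(\vx) = -\alpha \log Z^i$, and then needs a separate technical lemma (\cref{thm:tech:control_equiv}) to recombine the resulting $\deluis$- and $\vdunegis$-dependent pieces into the full-control expressions $ {Q^i_\vu}\T\vdus + \tfrac12 \vdus{}\T Q^i_{\vu\vu}\vdus$ that appear in the lemma statement. Your route --- the quadratic-form expectation identity over the block-diagonal joint Gaussian plus the closed-form Gaussian entropy --- produces those full-control expressions immediately, skips the technical lemma, and makes the origin of the cross-agent trace terms more transparent; the two computations must agree because $\inf_\pi\{\Eb_\pi[Q] - \alpha H[\pi]\} = -\alpha\log\int e^{-Q/\alpha}$ is attained at the Gibbs policy. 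One bookkeeping slip to fix: with $\Sigma = \mathrm{blockdiag}\big(\alpha (Q^j_{u^j u^j})^{-1}\big)$ the correction is $\tfrac12\tr\big(Q^i_{\vu\vu}\Sigma\big) = \tfrac{\alpha}{2}\sum_j \tr\big[Q^i_{u^j u^j}(Q^j_{u^j u^j})^{-1}\big]$, not $\alpha\sum_j$; the $j=i$ summand is then $\tfrac{\alpha}{2} n_{u^i}$, which is exactly what the $-\tfrac{\alpha}{2} n_{u^i}\log e$ constant from the Gaussian entropy cancels (your stated $\alpha\, n_{u^i}$ would not cancel it). Carried through consistently, your method gives a coefficient of $\tfrac{\alpha}{2}$ on the $j\neq i$ trace terms; the paper's own route (expectation of $\tfrac12 \vdunegi{}\T Q^i_{\unegi\unegi}\vdunegi$ over the non-ego Gaussians) yields the same $\tfrac{\alpha}{2}$, so the coefficient $\alpha$ printed in \eqref{eq:me:dVH} appears to be a factor-of-two typo in the lemma rather than a defect of your approach.
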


Again, we defer the proof of \cref{thm:me:val_fn} to the appendix in \cref{sm:proof:thm:me:val_fn}.

Note that the update rules for $V^i, V^i_\vx$ and $V^i_{\vx\vx}$ are exactly the same as in
iLQGames \cite{fridovich2020efficient}, with the only difference being the addition of the $V^i_{H}$ term resulting from the maximum entropy term.
This term only depends on $\Quui$ and approaches $0$ as $\alpha \to 0$.
Since we take $\Quui$ to be constant during each iteration of MELQGames, $V^i_H$ is not a function of $\vx$.
Consequently, the backward pass of MELQGames can be performed by additionally computing $V_H^i$ and $\Sigma^i$ in the backward pass of iLQGames. Also, note that unlike the optimal control case where we can simplify the value function update \eqref{eq:me:dV}--\eqref{eq:me:Vxx} by substituting the definition of $\vk$ and $\vK$ to cancel out terms, this does not hold in the dynamic games case.
This is because $Q_{\vu\vu}^i$ (Hessian of $Q^i$) does not equal to $\hat{Q}_{\vu\vu}$ (stacked rows of $Q_{\ui \vu}^i$) in general.

%%%%%%%%%%%%%%%%%%%%%%%%%%%%%%%%%%%%%%%%%%%%%%%%%%%%%%%%%%%
\subsection{Constrained MELQGames via Augmented Lagrangian}
\label{sec:constrained_me}
To incorporate constraints, we use the augmented Lagrangian framework
similar to the approach in ALGames \cite{cleac2019algames}, except we choose to use the DDP style unconstrained optimizer as opposed to Newton's method.
Let us denote by $\mathcal{L}^i$ the augmented Lagrangian for agent $i$.
Then,
\begin{equation}
    \mathcal{L}_A^i(\pi, \lambda; \rho) = J^i(\pi) + \frac{\rho}{2}
        % \sum_{j} \Big[(h^i)_j(\ExP{}{\vu}) + \frac{\lambda^i_j}{\rho} \Big]_{+}^2.
        \sum_{j} \max\bigg( 0, (h^i)_j(\ExP{}{\vU}) + \frac{\lambda^i_j}{\rho} \bigg)^2.
\end{equation}
where $\rho > 0$ denotes the penalty parameter and $\lambda^i_j > 0$ denotes the Lagrange multiplier corresponding to the $j$th constraint $(h^i)_j$ for agent $i$.
Since each agent may in general have a different number of constraints, the Lagrange multipliers are specific to each agent's constraints.

If the optimal Lagrange multipliers $\lambda^{i*}_j$ were known, then for a sufficiently large value of $\rho$ the minimizer of the augmented Lagrangian $\mathcal{L}_A^i$ would be a local minimum of the constrained problem \cite{nocedal2006numerical, ruszczynski2011nonlinear}.
To update the dual parameters $\lambda^i_j$, we perform the following dual-ascent step
\begin{equation} \label{eq:al:dual_update}
    (\lambda^i_j)^+ \gets \max(0, \lambda^i_j + \rho (h^i)_j(\ExP{}{\vU}).
\end{equation}
where we use the $+$ to denote the new $\lambda$.
Intuitively, the above update can be seen as an approximation to the optimal Lagrange multipliers $\lambda^{i*}_j$. Let $\mathcal{I}_A$ denote the set of active constraints, and suppose that the Lagrange multipliers for all inactive constraints are zero such that
\begin{equation}
  j \not \in \mathcal{I}_A \implies (h^i)_j + \frac{\lambda^i_j}{\rho} < 0 .
\end{equation}
Then, if $\pi$ is a minimizer of $\mathcal{L}_A^i$ then
\begin{equation}
    0
    = \nabla_{\ui} \mathcal{L}_A^i = \nabla_{\ui} J^i
        + \sum_{j \in \mathcal{I}_A} \big[ \rho (h^i)_j + \lambda^i_j \big] \nabla_{\ui} (h^i)_j,
\end{equation}
and hence $(\lambda^i_j)^+ = \rho (h^i)_j + \lambda^i_j$ is a good approximation to the Lagrange multiplier of the original constrained problem.
To maintain dual feasibility of the Lagrange multipliers, we project them by taking the positive part.

Alternatively, the Augmented Lagrangian method can be viewed as projected steepest ascent applied to the dual problem of \eqref{eq:pf:gnep} where $\rho$ plays the role of the step size in \eqref{eq:al:dual_update}.
Indeed, convergence rate results for Augmented Lagrangian methods show that under suitable conditions (ex.  \ac{LICQ} and \ac{SOSC} are satisfied at the local minima), the Lagrange multipliers converge linearly
\cite{nocedal2006numerical, ruszczynski2011nonlinear} with rate
\begin{equation}
    \frac{ \norm{ (\lambda^i_j)^+ - \lambda^{i*}_j } }
    { \norm{ \lambda^i_j - \lambda^{i*}_j } } \leq \frac{M}{\rho},
\end{equation}
for some constant $M$. Hence, larger values of $\rho$ should theoretically result in faster convergence of the Lagrange multipliers.
However, values of $\rho$ that are too large may make the unconstrained problem ill-conditioned and slow down convergence \cite{nocedal2006numerical, ruszczynski2011nonlinear}.
Hence, Augmented Lagrangian methods usually advocate for increasing $\rho$ by some factor $\gamma > 0$ if a sufficient decrease condition on some feasibilty metric such as
\begin{equation} \label{eq:al:metric}
    \mathcal{V} \coloneqq \norm*{ \min\left\{ -(h^i)_j, \frac{\lambda^i_j}{\rho} \right\} },
\end{equation}
has not been met \cite{birgin2014practical}.

The constrained MELQGames algorithm consists of an iterative scheme where we alternate approximately solving the (unconstrained) Nash equilibria for cost functions $\mathcal{L}_A^i$
and updating the Lagrange multipliers $\lambda^i_j$ with \eqref{eq:al:dual_update}.
If the metric $\mathcal{V}$ has not decreased by more than some $\tau \in (0, 1)$ since the last dual update, we increase $\rho$.

We now show in the following lemma that if the augmented Lagrangian MELQGames converges, the solution is a local GNEP.
\begin{lemma} \label{lemma:aug_lang_cvg}
Suppose that the augmented Lagrangian MELQGames converges to a tuple $(\pi^*, \lambda^i_j, \rho)$ for some $\rho > 0$ which satisfies dual feasibility of $\lambda$, \ac{LICQ} and \ac{SOSC}.
Then, $\pi^*$ is a \ac{GNE} for \eqref{eq:pf:gnep}.
\end{lemma}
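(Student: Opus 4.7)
The plan is to interpret the convergence of the Augmented Lagrangian MELQGames iteration as producing a tuple that satisfies the KKT conditions for each agent's constrained problem simultaneously, and then to invoke LICQ + SOSC to upgrade these first-order conditions into a local minimum, which by the definition in \eqref{eq:pf:gne_nec} is a local GNE.

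First I would analyze the fixed point of the dual update \eqref{eq:al:dual_update}. Convergence of $\lambda^i_j$ means $\lambda^i_j = \max\bigl(0,\,\lambda^i_j + \rho (h^i)_j(\ExP{}{\vU^*})\bigr)$. Splitting on whether $\lambda^i_j > 0$ or $\lambda^i_j = 0$, this identity forces $(h^i)_j(\ExP{}{\vU^*}) \leq 0$ in every case (primal feasibility) and $\lambda^i_j \, (h^i)_j(\ExP{}{\vU^*}) = 0$ (complementary slackness). Together with the dual feasibility of $\lambda^{i}_j \geq 0$ that the lemma assumes, this furnishes three of the four KKT conditions for \eqref{eq:pf:gnep} at $\pi^*$.

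Next I would extract stationarity from the unconstrained inner solve. At convergence, $\pi^{i*}$ is a local minimizer of $\mathcal{L}_A^i(\pi^i,\pi^{\negi *};\lambda,\rho)$ for each agent $i$ in the sense produced by MELQGames (the relevant parameter here is the mean $\ExP{}{U^i}$ of the Gaussian policy, since $\mathcal{L}_A^i$ depends on $\pi^i$ only through its mean for the constraint term and through mean plus covariance for the cost). Differentiating $\mathcal{L}_A^i$ with respect to $\ExP{}{U^i}$ and using the observation from Step~1 that, at the fixed point, $\max\bigl(0,\,\rho(h^i)_j + \lambda^i_j\bigr) = \lambda^i_j$ for every $j$, the augmented penalty terms collapse into the standard Lagrangian contribution $\sum_j \lambda^i_j \nabla_{\ExP{}{U^i}} (h^i)_j$. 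Stationarity of $\mathcal{L}_A^i$ therefore reduces exactly to the stationarity condition of the Lagrangian of agent $i$'s constrained problem in \eqref{eq:pf:opt}:
\begin{equation*}
\nabla_{\ExP{}{U^i}} J^i(\pi^*) + \sum_j \lambda^i_j \, \nabla_{\ExP{}{U^i}} (h^i)_j(\ExP{}{\vU^*}) = 0.
\end{equation*}

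Finally, combining primal feasibility, dual feasibility, complementary slackness, and stationarity yields the full KKT system for each agent $i$'s problem, parametrized by the fixed non-ego policies $\pi^{\negi *}$. Invoking LICQ and SOSC at $\pi^{i*}$, standard nonlinear programming theory (e.g.\ \cite{nocedal2006numerical, ruszczynski2011nonlinear}) guarantees that $\pi^{i*}$ is a strict local minimizer of $J^i(\pi^i, \pi^{\negi*})$ over $\Pi(\pi^{\negi*})$. Since this holds simultaneously for every $i$, the inequality \eqref{eq:pf:gne_nec} is satisfied in a neighborhood of $\pi^{i*}$, which is precisely the local GNE condition for \eqref{eq:pf:gnep}.

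The main obstacle I anticipate is the first step: justifying that the functional derivative of $\mathcal{L}_A^i$ with respect to the policy collapses cleanly into a gradient with respect to the mean controls. This is what allows the augmented-penalty/Lagrange-multiplier identity to translate directly into the classical KKT stationarity condition. Since the MELQGames policy from \cref{thm:me:opt_pol} is Gaussian and both $J^i$ and $h^i$ interact with the constraints only through $\ExP{}{\vU}$, this reduction is clean in our setting, but it is the step that most needs care.
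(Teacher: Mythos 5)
Your proof is correct and follows the same strategy as the paper's: show that each agent's converged policy is a local minimizer of its own constrained problem given the other agents' fixed policies, and conclude that the tuple satisfies the local GNE condition \eqref{eq:pf:gne_nec}. In fact your argument is considerably more complete than the paper's three-line proof, which simply asserts local optimality from LICQ, SOSC and dual feasibility --- you explicitly derive primal feasibility and complementary slackness from the fixed point of the dual update \eqref{eq:al:dual_update}, and stationarity of each agent's Lagrangian from the collapse $\max\bigl(0, \rho (h^i)_j + \lambda^i_j\bigr) = \lambda^i_j$ at convergence, which is exactly the verification the paper leaves implicit in the surrounding discussion of Section III-C.
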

We defer the proof of \cref{lemma:aug_lang_cvg} to \cref{sm:proof:lemma:aug_lang_cvg}.

%%%%%%%%%%%%%%%%%%%%%%%%%%%%%%%%%%%%%%%%%%%%%%%%%%%%%%%%%%%
%%%%%%%%%%%%%%%%%%%%%%%%%%%%%%%%%%%%%%%%%%%%%%%%%%%%%%%%%%%
\section{Extensions to Modeling Multimodality}
\label{sec:multimodality}
The vast majority of existing works that investigate game theoretic multi-agent interactions
consider unimodal behaviors
\cite{cleac2019algames, fonseca2018potential, fridovich2020efficient,mehr2021maximum, spica2020real,  williams2017autonomous}.
However, this may not be enough when the true value function is not unimodal, a common situation that can arise from nonconvex dynamics, cost functions or constraints.
In this section, we first outline a computational challenge for tackling the problem of multimodality in dynamic games. 
We then propose a computationally efficient multimodal extension to \ac{MELQGames} by reformulating the problem as an incomplete information games with information asymmetry and show how our formulation can additionally be used for Bayesian inference of the latent mode.

% \subsection{Multimodality in Dynamic Games}
% We first identify the following \textbf{two} ways in which multimodality can arise in multi-agent interactions:

% \vspace{0.5em}
% \noindent \textbf{Multimodal Cost Function: }
% The cost function for some agent $i$ may be inherently multimodal.
% As a result, there may be multiple distinct local NE where all agents except for agent $i$ have the same controls.
% One example of this would be when agent $i$'s trajectory must avoid an obstacle.

% \vspace{0.2em}
% \noindent \textbf{Uncertain Behavior of Other Agents: }
% On the other hand, there may be multimodal behavior even though the trajectories of agent $i$ are unimodal given the trajectories of other agents.
% If the other agent's trajectories are multimodal, the marginal trajectories of agent $i$ can be multimodal even though the conditional trajectories are all unimodal.

% \vspace{0.5em}
% In this paper, we focus our attention to addressing multimodal behavior of the latter, namely the case where multimodality is induced by uncertainty about multimodal behavior of non-ego agents despite having unimodal best-responses.
In this paper, we focus our attention on multi-agent interactions where multimodality is
induced by uncertainty about multimodal behavior of non-ego agents despite having unimodal best-responses.
We leave extensions to situations where the ego agent has a multimodal best-response by considering approaches such as compositionality \cite{so2021maximum} as future work.

One of the biggest challenges for handling multimodality with multiple agents is that computational costs can quickly become unfeasible due to the combinatorial explosion when accounting for the interactions between all agents.
To see this, consider a game with $N$ agents where each agent has a policy with $A$ different modes.
Evaluating the Nash equilibrium requires evaluating the expectation over all policies for all timesteps $t=0,\dots,T-1$,
resulting in a total of $O(T\, A^N)$ evaluations of the cost function and dynamics.
This combinatorial explosion is computationally intractable for any kind of realtime planning.

\subsection{Multimodal Dynamic Games via Information Asymmetry}
\label{subsec:mme}
We now propose a method for tackling the challenges mentioned above.
To start, we refine our problem setup and identify one agent as the ``ego'' agent.
Given that many applications for planning have the goal of controlling an agent, this is not an unreasonable assumption.
From hereon after, we denote the ego agent with index $1$ and use $\nonego$ to refer to non-ego agents.

Consider a set of $A$ different local (generalized) Nash equilibria with
$\{ (\vX^a, \vU^a) \}_{a=1}^A$
and let $a \in \{ 1, \dots, A \}$ be a discrete latent random variable that
% constrains the domain of $\pi$ such that $\pi^a$ is the unique optimal solution within that domain.
determines the nominal trajectory around which the cost functions and dynamics are approximated.
We now consider an extension of the maximum entropy dynamic game \eqref{eq:pf_medg:gnep}, where all agents except for the ego agent have full knowledge of the value of $a$ and are playing the corresponding optimal policy $\pi^a$.
However, the non-ego agents incorrectly believes that the ego agent knows what the true mode is.
This is now a \textbf{dynamic game with incomplete information} 
since the ego agent only has a \textit{belief} of what the true game is
but does not know what the true dynamics nor cost are.
However, since the non-ego agents know what the true mode is,
their actions act as a \textit{signal} for the ego agent and allow the 
ego-agent to update its belief using this information.

In the theory of games with incomplete information,
a \textit{state of the world} $\omega$
fully defines a possibility of the true cost function and dynamics of the game \cite{maschler2013game}.
Uncertainty over the true state of the world $\omega_*$ implies uncertainty over
what game is being played.
Let $Y$ be a finite set containing all possible states of the worlds
and $p_i : Y \to \Delta(Y)$ be the \textit{belief} of each agent mapping each state of the world $\omega \in Y$ to a probability distribution over $Y$. 
% {\color{red} COMMENTS: What is m? Be explicit on what the subscripts mean in (35)} 

For our problem, let $\omega_{a, s}$ and $\omega_{a, m}$ for $a=1,\dots,A$
denote different \textit{states of the world} such that 
when $\omega_* \in \big\{ \omega_{a, s},\,  \omega_{a, m} \big\}$,
the true cost and dynamics that correspond to the approximated costs and dynamics
around the local (generalized) Nash equilibrium $( \vxbar^a, \vubar^a )$.
The ``$s$'' and ``$m$'' here can be taken to mean ``single'' and ``multi''.
We now define our set $Y$ as
\begin{equation}
    Y \coloneqq \{ \omega_{1, s}, \dots, \omega_{A, s}, \omega_{1, m}, \dots, \omega_{A, m} \}.
\end{equation}
% {\color{red} COMMENTS:What is $ \omega_{a, u}} \omega_{a, m} $!!! } 

Let $\ind$ denote the indicator function and $p_a$ denote the ego agent's prior over the modes $a$ with support on the set $\{ \omega_{a, m} \}_{a=1}^A$.
We define the beliefs of the ego agent $p_1$ and non-ego agents $p_{\nonego}$ as
\begin{align}
    p_1(\omega) &= \begin{dcases}
        \ind_{\omega_{a, s}}(\omega), &\omega = \omega_{a, s}, \; a \in \{1, \dots, A \} \\
        p_a(\omega), &\omega = \omega_{a, m}, \; a \in \{1, \dots, A \}
    \end{dcases}, \\
    p_{\nonego}(\omega) &= \begin{dcases}
        \ind_{\omega_{a, s}}(\omega), &\omega = \omega_{a, s}, \; a \in \{1, \dots, A \} \\
        \ind_{\omega_{a, s}}(\omega), &\omega = \omega_{a, m}, \; a \in \{1, \dots, A \}
    \end{dcases},
\end{align}
% {\color{red} COMMENTS: You should first define all the symbols above and the present equations (36),(37). It is hard to follow. Define also upfront what is $ \omega_{a, u}} \omega_{a, m} $!!! } 
To give intuition to the above game setup, note that each mode $a$ is associated with two world states: $\omega_{a, s}$ and $\omega_{a, m}$.
When the true state $\omega_*$ is at $\omega_{a, s}$, all agents correctly believe that the
state is $\omega_{a,u}$.
However, when $\omega_* = \omega_{a, m}$, the non-ego agents incorrectly believe that the true state is $\omega_{a, s}$ while the ego agent correctly believes that the true state lies is in the set
$\{ \omega_{a, m} \}$ but is unsure which is the correct one.

With the above setup, we can use tools from game theory to gain intuition into the \textit{belief-structure} of this game, namely what each agent believes is true, what each agent believes other agents believe is true and so on \cite{maschler2013game}.
\begin{definition}[Belief Operator] \label{def:belief_operator}
Let $B_i$ denote the \textit{belief operator} such that for any event $A \subseteq Y$
\begin{equation} \label{eq:def:belief}
    B_i A \coloneqq \{ \omega \in Y : p_i(A | \omega) = 1\},
\end{equation}
and the conditional belief $p_i(A | \omega)$ is defined as
\begin{equation}
    p_i( A | \omega) \coloneqq \ExP{p_i(\omega)}{ \mathbbm{1}_{A} }.
\end{equation}
\end{definition}
In other words, $\omega \in B_i A$ means that at $\omega$, event $A$ \textit{obtains} according to
agent $i$'s belief in the sense that at least one of $\omega \in A$ corresponds to the true state.
We next define the concept of \textit{common belief} \cite{maschler2013game}:
\begin{definition}[Common Belief] \label{def:common_belief}
    Let $A \subseteq Y$ be an event and $\omega \in Y$.
    The event $A$ is \textit{common belief} at state $\omega$ if every agent believes that $A$ obtains, every agent believes that every agent believes that $A$ obtains, and so on.
    In other words, for every finite sequence $i_1, \dots, i_L$ of agents:
    \begin{equation}
        \omega \in B_{i_1} \dots B_{i_L} A,
    \end{equation}
\end{definition}
In particular, the event $Y$ is common belief among the agents, i.e., each agent knows that the true world state $\omega$ must be an element of $Y$.

Using \cref{def:belief_operator} and \cref{def:common_belief}, we now precisely define the key property of the dynamic game --- it is common knowledge that the non-ego agents believes that the ego agent knows the mode.
\begin{lemma} \label{thm:mme:common_belief}
Let $E = \{ \omega_{a,s} \}_{a=1}^A$ denote the event that the ego agent knows the correct mode with probability 1.
Then, for all $i \not= 1$, $\omega \in B_i E$, i.e. all non-ego agents believe that the ego agent knows the mode perfectly.
Furthermore, it is common belief that $B_i E_a$ for all $i \not = 1$.
In particular, the ego agent (correctly) believes that all non-ego agents (incorrectly) believes that it knows the correct mode.
\end{lemma}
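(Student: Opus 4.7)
The plan is to unpack the definitions of $p_\nonego$ and $B_i$ and verify the claimed containments directly. The core observation is that the non-ego belief $p_\nonego(\omega)$ is the Dirac mass at $\omega_{a,s}$ for \emph{both} $\omega = \omega_{a,s}$ and $\omega = \omega_{a,m}$, so it collapses all ambiguity about whether the ego agent knows the mode into the single world state where it does. Everything else should follow from repeatedly applying this one fact together with the obvious identity $B_j Y = Y$.

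First I would verify $B_i E = Y$ for every $i \neq 1$. By \cref{def:belief_operator} this reduces to checking $p_i(E \mid \omega) = 1$ for every $\omega \in Y$. For $\omega = \omega_{a,s}$, the conditional $p_i(\omega)$ is $\ind_{\omega_{a,s}}$, and $\omega_{a,s} \in E$ by the definition of $E$. For $\omega = \omega_{a,m}$, again $p_i(\omega) = \ind_{\omega_{a,s}}$, so the mass once more lies on a point of $E$. In both cases $p_i(E \mid \omega) = \ExP{p_i(\omega)}{\ind_E} = 1$, so $\omega \in B_i E$ for every $\omega$, i.e.\ $B_i E = Y$. This gives the first sentence of the lemma.

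Next I would establish the common belief claim. By \cref{def:common_belief}, I must show $\omega \in B_{i_1} \cdots B_{i_L}(B_i E)$ for every finite sequence of agents $i_1, \dots, i_L$ and every $i \neq 1$. Since $B_i E = Y$ from the previous step, it suffices to show that $B_j Y = Y$ for every agent $j$. This is immediate: $p_j(\cdot \mid \omega)$ is a probability distribution on $Y$, so $p_j(Y \mid \omega) = 1$ automatically. Hence iterating belief operators on $Y$ always returns $Y$, giving $B_{i_1}\cdots B_{i_L}(B_i E) = Y \ni \omega$.

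The final ``in particular'' statement is then the special case $i_1 = 1$, $L = 1$ applied to $B_\nonego E$, since $\omega \in B_1(B_\nonego E) = B_1 Y = Y$ says that in every world state the ego agent assigns probability one to the event that the non-ego agents assign probability one to $E$. I do not expect any real obstacle here --- the whole argument is a careful bookkeeping of what the Dirac-mass priors assign to the relevant events. The only mildly subtle point is notational: one must keep straight that the non-ego prior at $\omega_{a,m}$ is \emph{not} $\ind_{\omega_{a,m}}$ but rather $\ind_{\omega_{a,s}}$, which is precisely the asymmetry encoded in the problem setup and which makes the common-belief chain trivialize to $Y$.
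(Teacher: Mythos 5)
Your proof is correct and follows essentially the same route as the paper's: both establish $B_iE=Y$ for $i\neq 1$ by observing that the non-ego belief is the Dirac mass at $\omega_{a,s}$ in both world states $\omega_{a,s}$ and $\omega_{a,m}$, then trivialize the common-belief chain via $B_jY=Y$ and conclude $B_1B_iE=B_1Y=Y$. The only (inessential) difference is that the paper additionally records $B_1E=E$ and $B_1E^\complement=E^\complement$ to underline that the ego agent correctly knows when it does not know the mode.
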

We delegate the proof of \cref{thm:mme:common_belief} to \Cref{sm:sec:proof:mme:common_belief}.
Furthermore, we refer the interested reader to \citet{maschler2013game} for more information on 
belief spaces and games with incomplete information.
The above lemma shows how this choice of information asymmetry simplifies computation.
Namely, since each non-ego agent believes that the ego-agent knows the mode perfectly, each non-ego agent's control will exactly be the Nash-equilibria control in the unimodal case.

Finally, we introduce the concept of a (local) \textit{Bayesian Nash equilibrium}.
This is a generalization of the classical (generalized) Nash equilibrium to the incomplete information case, where the \textit{expectation} of an agent's cost with respect to its own belief is used \cite{maschler2013game}:
\begin{definition}[Bayesian Nash equilibrium]
    A policy $\pi^*$ is a Bayesian Nash equilibrium if, for all agents $i$,
    \begin{equation}
        \ExP{p_i}{ J^i(\pi^{i*}, \pi^{\negi *}) } \leq \ExP{p_i}{ J^i(\pi^i, \pi^{\negi *}) },
        \quad \pi^i \in \Pi(\pi^{\negi *}),
    \end{equation}
    where the expectation is taken with respect to agent $i$'s belief of the world state $\omega_* \in Y$.
\end{definition}
As a result, when $\omega_* = \omega_{a, s}$,
since all agents know the mode $a$, the optimal $\pi^*$ simply corresponds to the optimal policy $\pi^{a}$ for the generalized Nash equilibrium found previously.
When $\omega_* = \omega_{a, m}$, the non-ego agents believe that the true state 
is still $\omega_{a, s}$ and hence the optimal policy is still $\pi^{\negi, a}$.
On the other hand, for $\omega_* = \omega_{a, m}$, although the ego-agent is uncertain about the true value of $a$, it knows via \cref{thm:mme:common_belief} that the non-ego agents will play $\pi^{\negi, a}$.
Furthermore, the controls of the non-ego agents $u^{\negi}$ give information about what the true value of $a$ is.
Hence, the problem of computing the Bayesian Nash equilibrium $\pi^{i*}$ reduces to the problem of solving a \ac{POMDP}
where the belief space is over the latent variable $a$ and the observations $\vo$ are the observed controls of the non-ego agents.
The conditional dependencies for the POMDP are shown in the graphical model in \cref{fig:pomdp_model}.
\begin{figure}
    \centering
    \includegraphics[width=\linewidth]{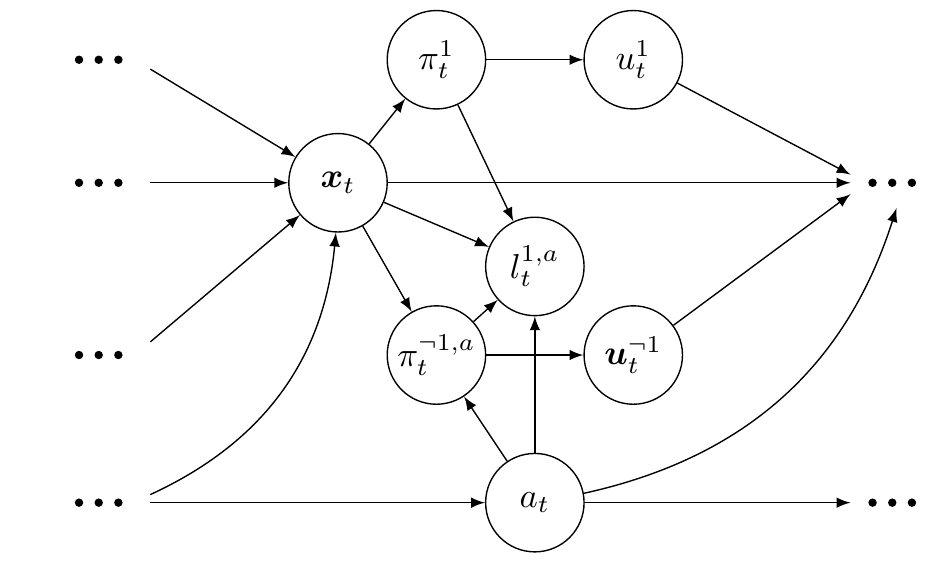}
    \caption{
    Graphical model of the POMDP for the ego agent with corresponding value function \eqref{eq:ego_pomdp}.
    The state $\vx_t$ and non-ego agent controls $\vu_t^{\nonego}$ are observed variables,
    while the computed set of non-ego policies $\pi_t^{\nonego,a}$ is dependent on
    the partially observable discrete latent mode $a$.
    The latent mode $a$ also affects the true cost function $l^{1,a}_t$ and the dynamics for the next state $\vx_{t+1}$.
    }
    \label{fig:pomdp_model}
\end{figure}
By using the standard belief-space approach to solve the \ac{POMDP}, the Bellman equation takes the form
\begin{equation} \label{eq:ego_pomdp}
    V^1(\vx, \vb_t) = \inf_{\pi^1} \Eb_{\pi} \bigg[
        \Eb_{\vb_t} l^{1,a}(\vx, \vu) + \Eb_{\pi^{\nonego}}
            {V^1}'\big( f(\vx, \vu), \vb_{t+1} \big)
    \bigg],
\end{equation}
where $\vb_{t+1}$  refers to the updated belief for the next timestep via Bayesian filtering:
\begin{equation}
    \vb_{t+1}(a) \propto \pi^{\nonego}(\vu^{\nonego} | a) \, \vb_{t}(a).
\end{equation}
While solving for general \ac{POMDP}s is computationally untractable \cite{papadimitriou1987complexity},
there exist computationally efficient methods for solving \ac{POMDP}s
over discrete latent spaces by using a \ac{DDP}-based approach \cite{qiu2020latent}.
However, to simplify the implementation and presentation of this work, we choose to bypass the problem of solving the full POMDP by
making the following assumption.
\begin{assumption}
The ego agent will be informed of the true mode $a$ after one timestep at $t=1$.
\end{assumption}
With this assumption, the value functions for $t \geq 1$ are known and correspond to the value functions $V^{1,a}$ for each mode $a$ of the found local Nash equilibrium.
Hence, the Bellman equation at $t=0$ reads
\begin{align} \label{eq:approx_pomdp}
\begin{split}
% V^1(\vx, \vb_0) 
V^1
&= \inf_{\pi^1} \Eb_{\vb_0, \pi} \bigg[
        l^{1,a}(\vx, \vu)
        + {V^{1,a}}'\big( f(\vx, \vu) \big)
    \bigg] - \alpha H[\pi^1],
\end{split} \nonumber \\
\begin{split}
&= \inf_{\pi^1} \bigg\{ \Eb_{\pi^i} \Big[
    \frac{1}{2} {u^1}\T \tilde{Q}_{u^1 u^1}^1 u^1
    + \left( \tilde{Q}_{u^1}^1 + \tilde{Q}_{u^1}^1 \vx \right)\T u^i
\Big] \\
&\qquad\qquad + c - \alpha H[\pi^1] \bigg\},
\end{split}\raisetag{1\baselineskip}
\end{align}
where $\vb_0 \in \Delta(Y)$ denotes the ego agent's prior belief of the true mode, $c$ encapsulates all terms that are constant with respect to $u^1$, and
\begin{align}
    \tilde{Q}_{u^1 u^1}^{1} &\coloneqq \ExP{\vb_0}{ Q_{u^1 u^1}^{1,a} },
    \qquad \tilde{Q}_{u^1 \vx}^{1} \coloneqq \ExP{\vb_0}{ Q_{u^1 \vx}^{1,a} }, \\
    \tilde{Q}_{u^1}^{1} &\coloneqq \ExP{\vb_0}{
        Q_{u^1}^{1, a} - Q_{u^1 u^1}^{1,a} \bar{u}^{1,a} - Q_{u^1 \vx}^{1,a} \bar{\vx}^a }.
\end{align}
Consequently, the optimal policy $\pi^1$ corresponds to the Gibbs distribution and is solved in the 
same way as in \ac{MELQGames} \cref{thm:me:opt_pol}.

% ----------------------------------------------
\textbf{Choices of the prior distribution: }
There are many valid choices for the prior distribution on the modes $\vb_0$.
If no prior information on the non-ego agents is known, one choice is to choose $\vb_0$ as the maximum entropy distribution that minimizes the sum of the value functions for each agent:
\begin{equation} \label{eq:fair_mode_prior}
    \vb_0(a) = \inf_{p} \ExP{a \sim p}{ \sum_{i=1}^N V^{i,a}(\vx_0) } - \alpha H[p].
\end{equation}
With this prior, modes that result in lower costs for all agents will have higher probabilities over
modes that have high costs for all agents.
However, in the context of planning for an ego agent, this may favor local minima which are bad for the ego agent when better alternatives exist.

\begin{figure*}[t]
\centering
\includegraphics[width=0.9\textwidth]{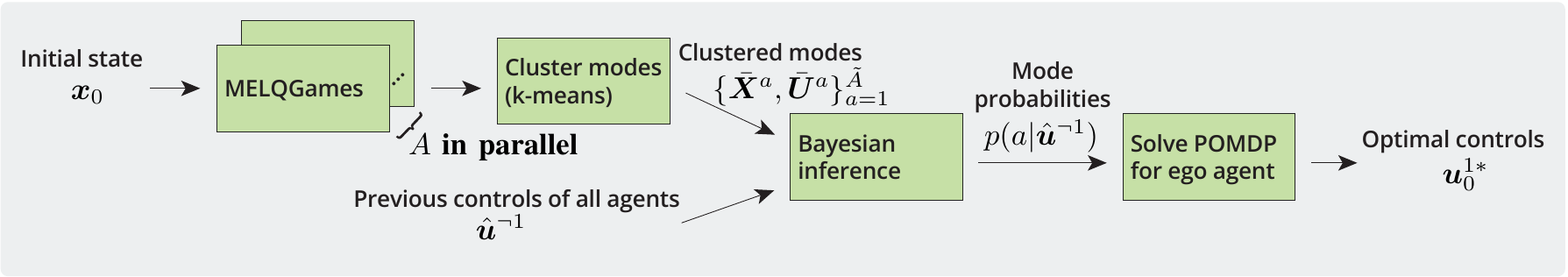}
\caption{Schematic diagram of the proposed MPC \ac{MMELQGames} algorithm presented in \eqref{alg:mme_mpc} and \eqref{alg:inference_mpc}.
}
\label{fig:alg_diagram}
\end{figure*}
Alternatively, we can bias the selection of modes to ones which are favorable to the ego-agent by choosing $\vb_0$ as the maximum entropy distribution that minimizes the sum of the ego agent's value functions
\begin{equation} \label{eq:ego_biased_mode_prior}
    \vb_0(a) = p(a|\vx_0) = \inf_{p} \ExP{a \sim p}{ V^{1,a}(\vx_0) } - \alpha H[p].
\end{equation}

\subsection{Latent Mode Bayesian Inference via MaxEnt}
\label{subsec:bayesian_inference}
In a MPC context, we can improve on our choices of the prior since the controls of the non-ego agents from previous timesteps act as signals for the true mode $a$.
By observing them, the ego agent can form a more accurate estimate of $\vb_0$.
For simplicity, we assume that the observations of non-ego agent's controls are noiseless, though this can be generalized to the noisy case in a Bayesian fashion by incorporating an observation model.
Suppose that the controls $\hat{\vu}^{\nonego} \coloneqq \vu_{-k:-1}^{\nonego}$ and optimal policies $\{ \pi^{\nonego, a} \}_{a=1}^A$ for all non-ego agents from the past $k$ timesteps are known.
Then, the posterior on those previous controls can be used for our estimate of $\vb_0$:
\begin{equation} \label{eq:inference:posterior}
    \vb_0(a)
    \coloneqq p( a | \hat{\vu}^{\nonego} )
    % &= p(a) \prod_{t=-k}^{0} \frac{ p(\vu_{-t}^{\nonego}|a) }
    % { \sum_{a'=1}^A p(\vu_{-t}^{\nonego}|a') \, p(a') },
    \propto p(a) \prod_{t=-k}^{-1} p(\vu_{t}^{\nonego}|a) ,
\end{equation}
where $p(a)$ is chosen to be the prior \eqref{eq:ego_biased_mode_prior} from the previous subsection
that biases towards modes that are favorable to the ego-agent.
With this choice of prior, modes which describe the controls of non-ego agents and are beneficial to the ego-agent will have high probability.

When computing the posterior in \eqref{eq:inference:posterior},
it is assumed that all of the controls in $\hat{\vu}^{\nonego}$ correspond to the same mode $a$.
While one could simply use all of the collected observations to compute the posterior,
this choice is invalid in the case that non-ego agents switch to a different mode somewhere in the middle of the collected observations.
To solve this issue, we can make the less restrictive assumption that only the last $k$ controls from non-ego agents correspond to the same mode.

%%%%%%%%%%%%%%%%%%%%%%%%%%%%%%%%%%%%%%%%%%%%%%%%%%%%%%%%%%%%%%%%%%
%%%%%%%%%%%%%%%%%%%%%%%%%%%%%%%%%%%%%%%%%%%%%%%%%%%%%%%%%%%%%%%%%%
\section{Game Theoretic Model Predictive Control}
\label{sec:gt_mpc}
In this section, we summarize the MMELQGames algorithm which combines the
solution to the information asymmetric game setup described in \cref{subsec:mme}
with the latent mode Bayesian inference in \cref{subsec:bayesian_inference} and propose a receding horizon game-theoretic planner that can reason about and infer multiple hypothesis in \cref{alg:mme_mpc}, \cref{alg:find_modes}, \cref{alg:inference_mpc} and summarized in \cref{fig:alg_diagram}.
\Cref{alg:mme_mpc} can be summarized as first solving for a set of local Nash equilibrium in \cref{alg:find_modes},
then using the solutions as inputs to solve a POMDP for the ego agent with uncertainty over the mode in \cref{alg:inference_mpc}.

\begin{algorithm}[t]
\caption{MMELQGames}
\begin{algorithmic}[1]

\State Initialize $\{ \pi \}_{a=1}^{\tilde{A}}$ with random policy.
\State Initialize buffer of non-ego control observations $\mathcal{U}$.
\MRepeat
    \State $\{ (\vX^a, \pi^a) \}_{a=1}^{\tilde{A}} \gets \textrm{ Find Modes}$
    \State $\pi^{1} \gets \textrm{ Solve Ego Agent POMDP}$
    \State Execute $\pi^{1}$ by sampling or taking the mean control
    \State Observe non-ego controls $\vu^{\nonego}$ and add to $\mathcal{U}$
    \State Shift $\pi$ by one timestep for warmstarting next iteration
\EndRepeat

\end{algorithmic}
\label{alg:mme_mpc}
\end{algorithm}
\begin{algorithm}[t]
\caption{Find Modes}
\begin{algorithmic}[1]
\Require Number of modes $A$,
Number of iterations $K$,
control estimates $\{ \vubar^a \}_{a=1}^A$,
Penalty parameter $\rho$,
Feasibility progress $\tau > 0$,
Penalty increase $\gamma > 0$,
Thresholds $\epsilon>0$

\State Initialize $\lambda^i_j = 0, \mathcal{V}^a_0 = \infty, \rho^a = 1$
\State Compute nominal mean trajectory $\{ \bar{\vX}^a \}$ using $\{ \bar{\vU}^a \}$.
\For{$k=1$ to $K$}

\For{$a=1$ to $A$ \textit{in parallel}}
    \State $\pi^{i,a*}, V^{i,a}_{x}, V^{i,a}_{xx} \gets$ Backward pass \eqref{eq:me:opt_pol}, \eqref{eq:me:dV}--\eqref{eq:me:Vxx}
\EndFor

\For{$a=1$ to $A$ \textit{in parallel}}
    \State $(\bar{\vX}^a, \bar{\vU}^a) \gets $ Line Search on $\max_i \norm{ J^i_{\ui} }$
\EndFor

\State Compute $\mathcal{V}^i_k$ using \eqref{eq:al:metric}
\If{ $\max_i \norm{ J^i_{\ui} } \leq \epsilon$ } \Comment{Unconstrained Converged}
    \If{ $\mathcal{V}^a_k \leq \epsilon$ } \Comment{Constraints Converged}
        \State Save $(\bar{\vX}^a, \bar{\vU}^a, \pi^{u, a*}, V_x^{i,a}, V_{xx}^{i,a})$ and reinitialize mode $a$ by sampling from $\pi$.
    \EndIf
    
    \If{ $\mathcal{V}^a_k > \tau \mathcal{V}^a_{k-1}$ } \Comment{Insufficient Decrease}
        \State $\rho^a \gets \gamma \rho^a$
    \EndIf
\EndIf
\EndFor
%       \State $\bar{V}^{i, a} \gets J^i( \vxbar^a, \vubar^a )$
\State $\{ \bar{\vU}^a \}_{a=1}^{\tilde{A}} \gets \textrm{ Cluster } \{ \bar{\vU}^a \}$
\end{algorithmic}
\label{alg:find_modes}
\end{algorithm}
\begin{algorithm}[t]
\caption{Solve Ego Agent POMDP}
\begin{algorithmic}[1]
\Require Observed controls from past $k$ timesteps $\vu_{-k:-1}^{\nonego}$,
Value functions $V^{i,a}$,
Optimal Policies $\{ \pi^{\nonego, a} \}_{a=1}^{\tilde{A}}$ for non-ego agents

\State $p(a) \gets $ Compute prior with \eqref{eq:fair_mode_prior} or \eqref{eq:ego_biased_mode_prior}
\State $\vb_0 \gets $ Compute initial belief with \eqref{eq:inference:posterior}
\State $\pi^1 \gets $ Solve full \eqref{eq:ego_pomdp} or approximate \eqref{eq:approx_pomdp} POMDP

\end{algorithmic}
\label{alg:inference_mpc}
\end{algorithm}

\Cref{alg:find_modes} starts by first finding a set of modes that each solve the local maximum entropy generalized Nash equilibrium problem \eqref{eq:pf_medg:gnep}.
Moreover, to obtain the optimal policies for the non-ego agents from previous timesteps,
we solve starting from the earliest timestep which we wish to condition our posterior computation \eqref{eq:inference:posterior} on, warm-starting this computation
from the previous iteration's solution if available.
To solve for each mode, we use the \ac{MELQGames} algorithm.
More specifically, we first compute the nominal state trajectories $\{ \vxbar^a \}_{a=1}^A$ as well as derivatives of the cost functions $l^i$ and dynamics $f$ along $(\vxbar^a, \vubar^a)$ for each mode.
Then, we solve for the backward pass by computing the optimal policy \eqref{eq:me:opt_pol} and updating the value function \eqref{eq:me:val_fn_update} for each timestep starting from the terminal time $T$.
Finally, we update the nominal controls using a line search on the maximum of the norm $\max_i \norm{J^i_{\ui}}$.
We exploit the fact that each local Nash equilibrium can be computed independent by solving for $A$ local generalized Nash equilibrium \textit{in parallel}, where $A$ is chosen to be the number of cores available for parallel processing on the CPU.
This iteration of backward and forward pass continues for each mode until this norm is smaller than some threshold $\epsilon$, upon which it is saved into a set of converged modes before being reinitialized from the current $\pi$.
After the maximum number of iterations has been reached, we perform clustering via k-means to remove potential duplicate modes before returning the reduced set $\{ (\vxbar^a, \vubar^a)_{a=1}^{\tilde{A}} \}$ of unique Nash equilibrium.

Then, in \cref{alg:inference_mpc}, we use the ego agent's value function, optimal policies for the non-ego agents $\pi^{\negi,a}$ and
observed controls from non-ego agents to perform Bayesian inference on the mode $a$ to compute our belief prior $\vb_0$ via \eqref{eq:inference:posterior}.
Consequently, we use $\vb_0$ and $\pi^{\negi,a}$ to solve the POMDP \eqref{eq:ego_pomdp} and obtain the optimal policy $\pi^{1,*}$.
This can be done by either solving the full POMDP \eqref{eq:ego_pomdp} using the method from \cite{qiu2020latent}
or by solving the approximated one-step POMDP \eqref{eq:approx_pomdp}.

Finally, we either execute a random sample from $\pi^{1,*}$ or use the mean control before repeating the algorithm again in a receding horizon fashion.

%%%%%%%%%%%%%%%%%%%%%%%%%%%%%%%%%%%%%%%%%%%%%%%%%%%%%%%%%%%%%%%%%%
%%%%%%%%%%%%%%%%%%%%%%%%%%%%%%%%%%%%%%%%%%%%%%%%%%%%%%%%%%%%%%%%%%
\section{Connections to Related Works}
\label{sec:connections}
\subsection{Game-Theoretic Planning}
There are two main approaches to solving for game-theoretic equilibria: 
\ac{IBR}-based and direct methods.

% ===================================================
\textbf{IBR-based methods: }
\ac{IBR}-based methods rely on either the Jacobi decomposition or Gauss-Seidel decomposition to decompose the problem of finding first-order stationary points into separate optimal control problems for each agent \cite{britzelmeier2019numerical, williams2017autonomous}.
These methods optimize the controls for one agent while keeping the controls for other agents fixed, repeating this process iteratively until convergence.
By holding controls, \ac{IBR} assumes that
$
  \frac{ \partial \vunegi_{t+1} }{ \partial \ui_t }
  = \frac{ \partial \vunegi_{t+1} }{ \partial \vx_{t+1} }
  \frac{ \partial \vx_{t+1} }{ \partial \ui_t }
  = 0%
$.
As a result, IBR lacks the term below key to describing inter-agent behavior arising from asymmetric inter-agent coupling terms:
\begin{equation}
    \frac{\partial J^i}{\partial \ui_t}
    =  \frac{\partial J^i}{\partial \vx_{t+2}} \cdot \left(
        \frac{\partial \vx_{t+2}}{\partial \vunegi_{t+1}}
        \frac{\partial \vunegi_{t+1}}{ \partial \vx_t }
        \frac{\partial \vx_t}{ \partial \ui_t }
        + \dots
        \right).
\end{equation}
To remedy this, approaches from \cite{spica2020real, wang2019game, wang2020multi} propose the SE-IBR algorithm which extends \ac{IBR} by
accounting for some of missing information using the sensitivity nformation from the Lagrange multipliers of active constraints.
However, their approach relies on a particular structure of the cost function.
Additionally, the convergence of \ac{IBR}-based methods is not well understood \cite{facchinei2010generalized} and can potentially require many trajectory optimization iterations before convergence.

% ===================================================
\textbf{Direct methods: }
Direct methods solve for the coupled system of equations resulting from the first-order stationary conditions directly.
Examples include methods based on Newton's method \cite{di2019newton, di2020first, cleac2019algames} and \ac{DDP} \cite{fridovich2020efficient, schwarting2021stochastic}.
The proposed \ac{MMELQGames} algorithm falls under the category of \ac{DDP}-based methods as it solves a quadratic approximation of the dynamic game during each timestep.
However, unlike previous methods, our method focuses on the constrained \ac{MaxEnt} version of the game and also incoporates multimodality via a novel formulation of an incomplete information with information asymmetry.
% {\color{red} COMMENTS: Nice Oswin!!}

% ===================================================
\subsection{Multimodal Planning}
There are relatively few works that explore the concept of multimodality within the context of dynamic games. 
This idea is explored in \cite{laine2021multi}, but requires the multimodality to be known a priori and explicitly encoded into the model.
One series of works \cite{ivanovic2019trajectron, ivanovic2018generative, schmerling2018multimodal} approach this problem by training a generative model offline conditioned on a discrete latent variable to learn the trajectories of other agents. While this approach does take multimodality into account, it is not done in a game-theoretic context.

The work in this paper is most similar to \cite{qiu2020latent},
which also solves a POMDP with a discrete latent variable.
However, in their work, the discrete latent variable must be prespecified while the multimodality is discovered in this work.
Also, their work only looks at optimality for a single agent while
in our work we focus on solving for generalized Nash equilibria, with the POMDP being only one piece of the algorithm.

% ===================================================
\subsection{Bayesian Inference and Laplace Approximation}
The maximum entropy term transforms the difficulty of each agent's optimization problem into that of sampling from the optimal $\pi^i$ and computing the normalization term $Z^i$, a challenge shared by many techniques for Bayesian inference.
Some popular techniques for tackling Bayesian inference include \ac{MCMC} and \ac{VI} \cite{andrieu2003introduction, salimans2015markov}, though these methods are usually much slower and are computationally intractable for planning on real time. We do note however a series of works \cite{okada2020variational, wang2021variational} which successfully apply variational inference via sampling-based optimization.
Laplace Approximation \cite{daxberger2021laplace} is an approximation framework for Bayesian inference by finding a Gaussian approximation to a probability density function 
MELQGames can be viewed as applying the Laplace Approximation to $\pii_t$ at each timestep $t$.
% Similarly, we can similarly view MMELQGames as a stagewise \textit{mixture} of Laplace Approximations. This idea has been explored in \cite{eschenhagen2021mixtures} in the context of improving classification accuracy for Neural Networks via multiple modes.

%%%%%%%%%%%%%%%%%%%%%%%%%%%%%%%%%%%%%%%%%%%%%%%%%%%%%%%%%%%%%%%%%%
%%%%%%%%%%%%%%%%%%%%%%%%%%%%%%%%%%%%%%%%%%%%%%%%%%%%%%%%%%%%%%%%%%
\section{Experiments}
\label{sec:simulation}
In this section, we compare the proposed \ac{MMELQGames} algorithm against the IBR and MELQGames algorithms on two examples which illustrate the capabilities of MMELQGames in recovering a rich set of multi-agent interactions.
The reader is encouraged to view the videos included in the supplementary material which
showcase the behavior of \ac{MMELQGames} compared to \ac{MELQGames} and \ac{IBR}.

% \begin{figure}
%     \centering
%     \includegraphics[width=0.49\linewidth]{}
%     \includegraphics[width=0.49\linewidth]{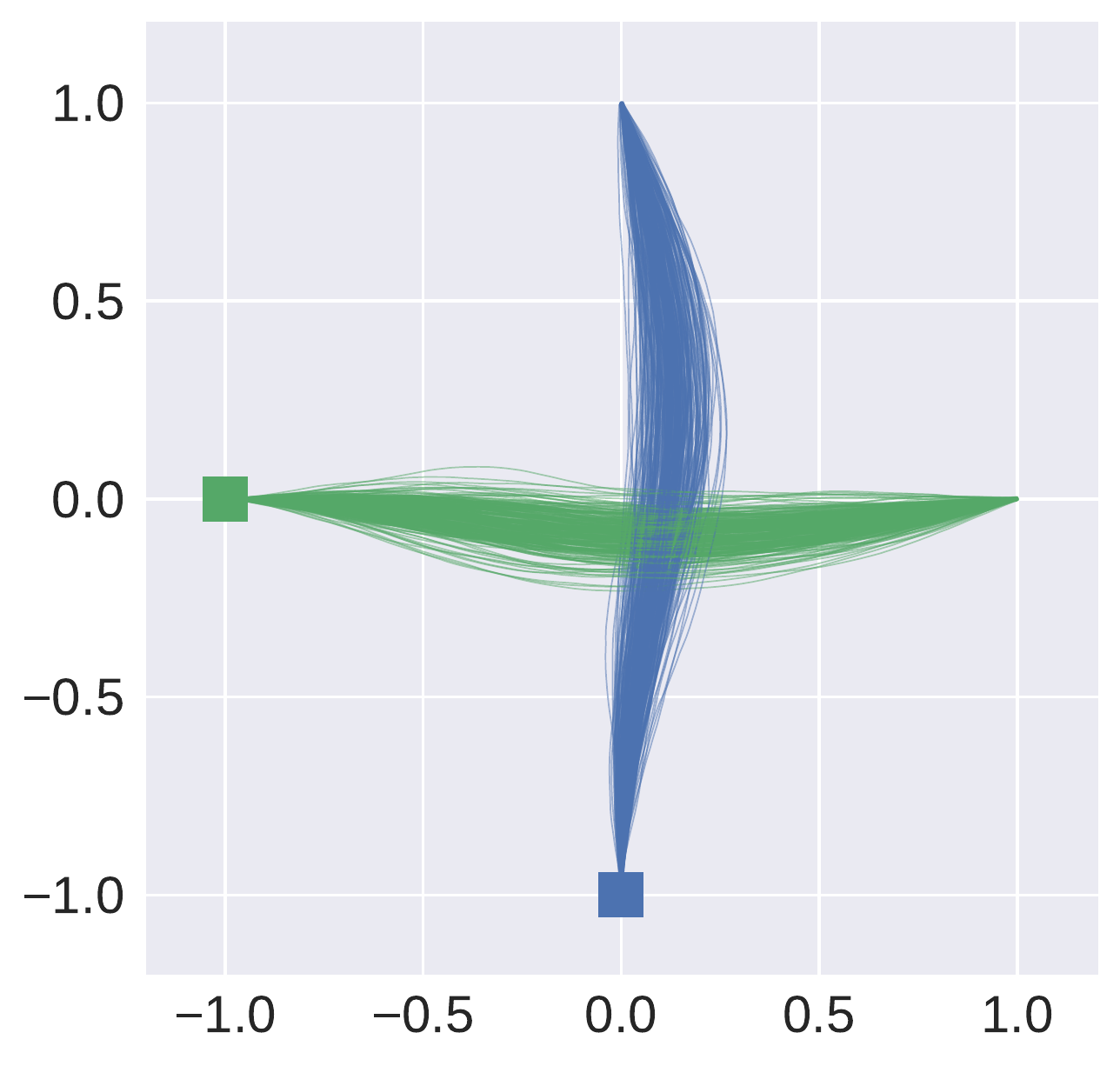}
    
%     \vspace{1em}
%     \includegraphics[width=0.49\linewidth]{}
%     \includegraphics[width=0.49\linewidth]{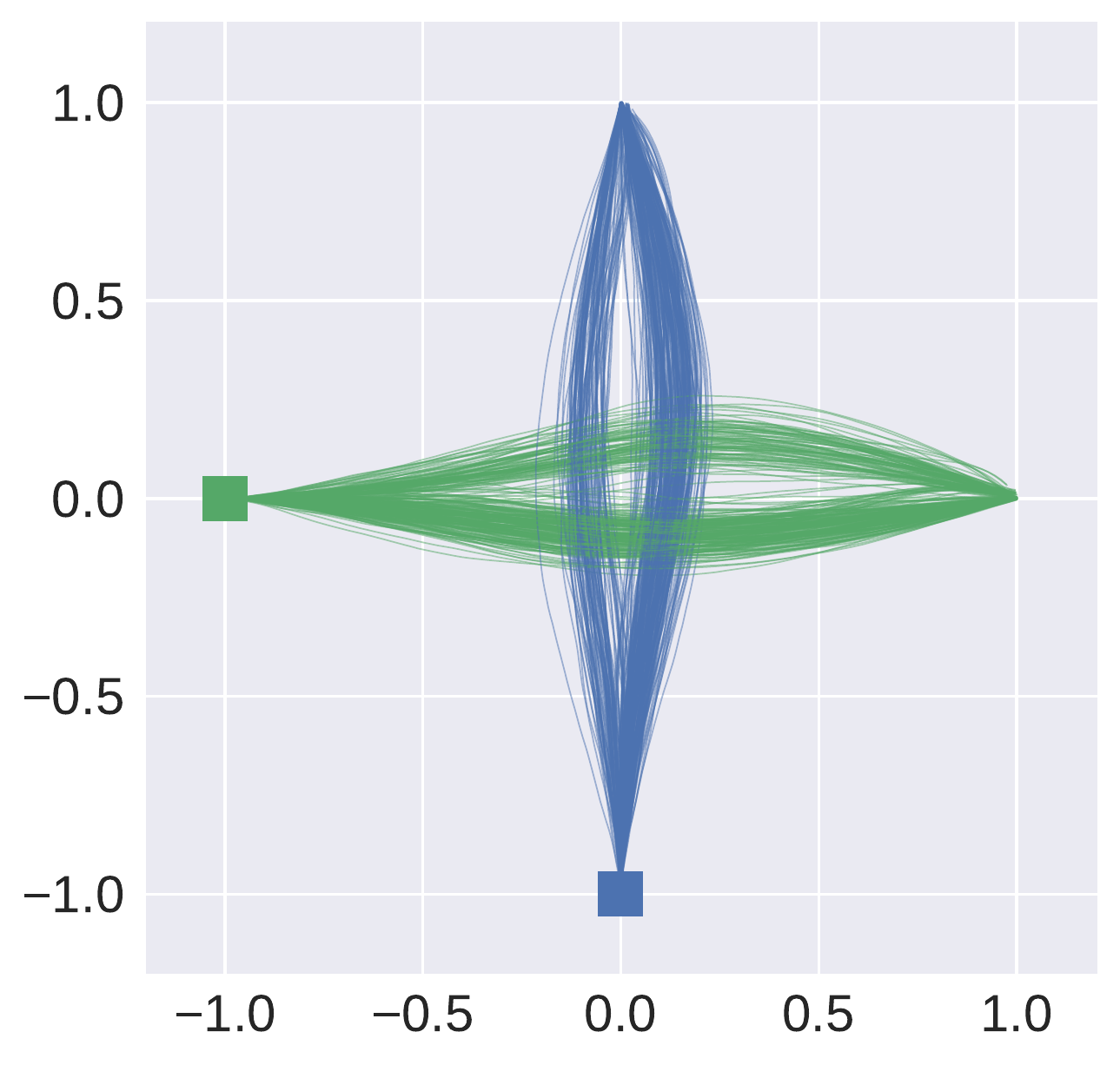}
%     \caption{Comparison of MELQGames (top) and MMELQGames (bottom).
%     The left plot shows the planned mean policies for each mode.
%     The square denotes the initial position for each agent, with lines indicating that positions are from the same timestep.
%     The right plot shows different realizations from the stochastic policy, where the mode $a$ is sampled with the prior from \eqref{eq:fair_mode_prior}.
%     MMELQGames is able to successfully find a bimodal policy that reaches the target while avoiding the other agent.
%     }
%     \label{fig:sim:swap_2}
% \end{figure}
\begin{figure}


    \centering
    \includegraphics[width=0.49\linewidth]{figures/swap/2/me_stoch.pdf}
    \includegraphics[width=0.49\linewidth]{figures/swap/2/mme_stoch.pdf}
    \caption{
    Comparison of realizations of the stochastic policy obtained from
    \textbf{M}ELQGames (\textbf{left}) and \textbf{MM}ELQGames (\textbf{right}).
    % The left plot shows the planned mean policies for each mode.
    % The square denotes the initial position for each agent, with lines indicating that positions are from the same timestep.
    For \textbf{MM}ELQGames, the mode $a$ is sampled using the prior from \eqref{eq:fair_mode_prior}.
    % MMELQGames is able to successfully find a bimodal policy that reaches the target while avoiding the other agent.
    }
    \label{fig:sim:swap_2}
\end{figure}
\begin{figure}
    \centering
    \includegraphics[width=0.49\linewidth]{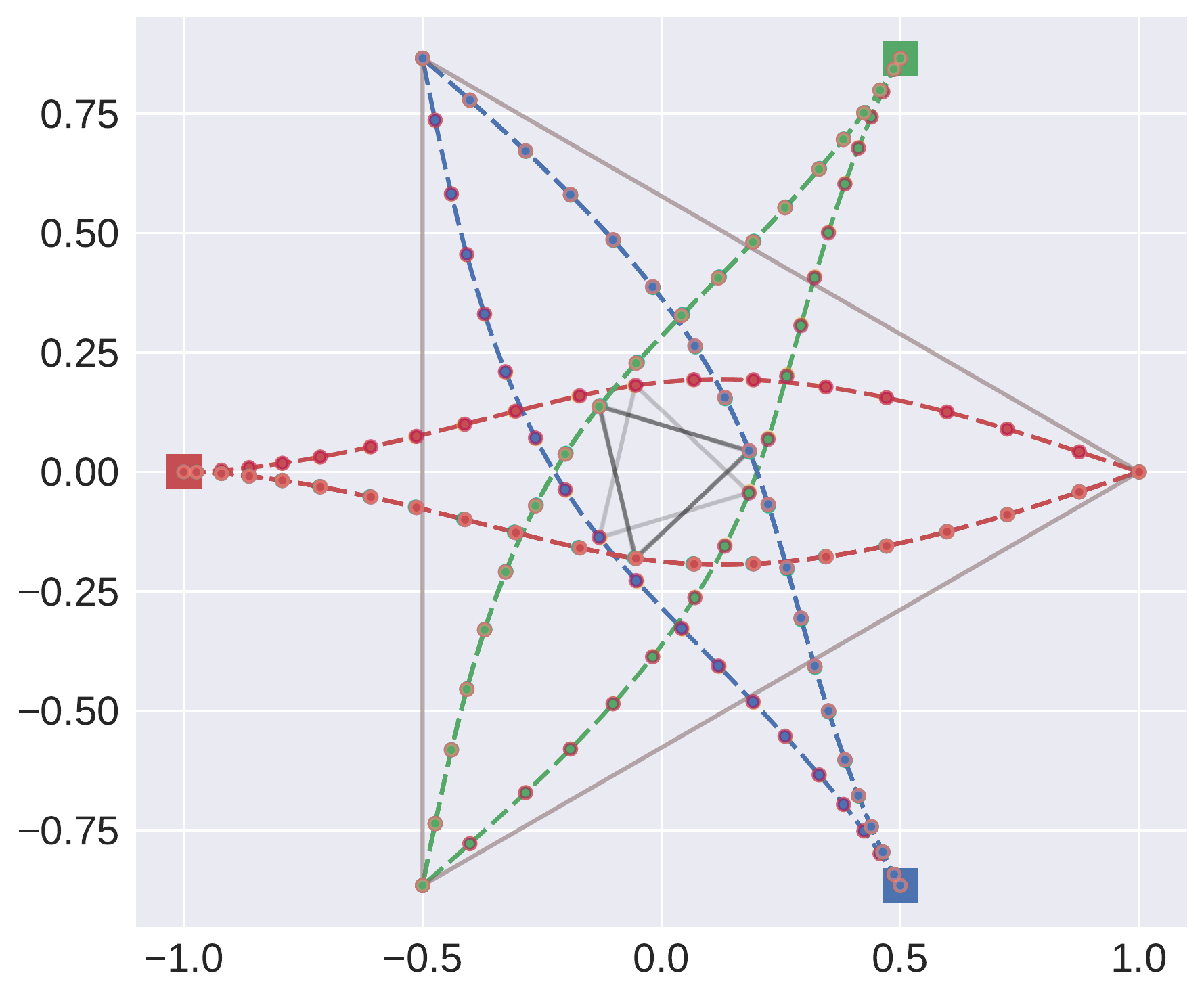}
    \includegraphics[width=0.49\linewidth]{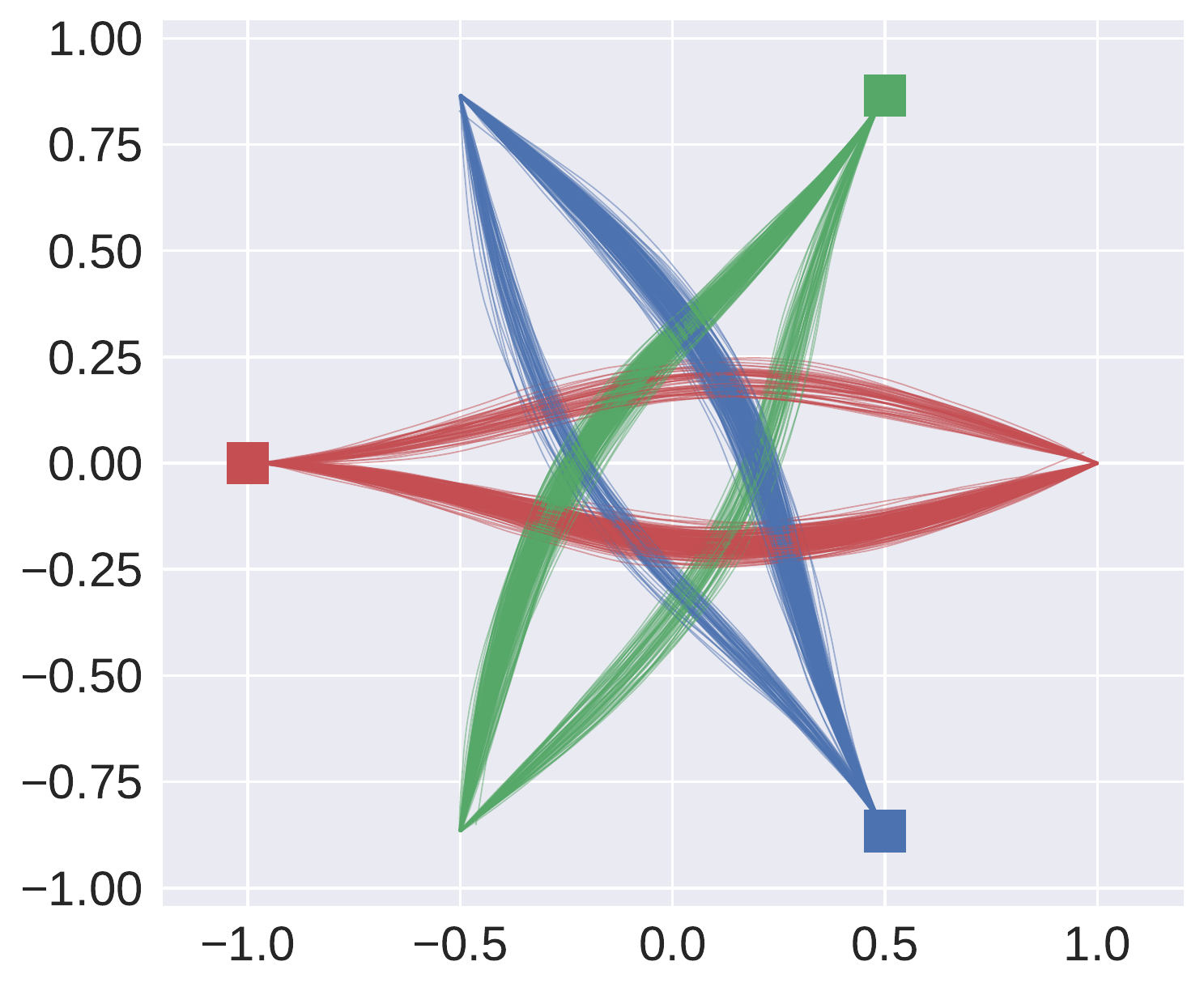}
    \caption{MMELQGames for the three agents case, where the algorithm is able to find both Nash equilibria.
    The \textbf{left} plot shows the planned mean policies for each mode.
    The square denotes the initial position for each agent, with lines indicating that positions are from the same timestep.
    The \textbf{right} plot shows realizations of the stochastic policy.
    Again, the prior from \eqref{eq:fair_mode_prior} is used for sampling the mode $a$.
    }
    \label{fig:sim:swap_3}
\end{figure}

{
\newcommand{\mygriditem}[1]{%
\begin{minipage}[b]{0.41\linewidth}%
    \includegraphics[width=\textwidth]{#1}%
\end{minipage}%
}
\newcommand{\myrowlabel}[1]{%
\parbox[b]{.95\linewidth}{\centering #1}%
}

% \begin{figure*}[t]
\begin{figure}[t]
    \centering
    % Grid headers.
    \hspace{.05\linewidth}
    \parbox[b]{.41\linewidth}{\centering Overtake Above}
    \parbox[b]{.41\linewidth}{\centering Overtake Below}\\
    \vspace{0.3em}
    
    \mygriditem{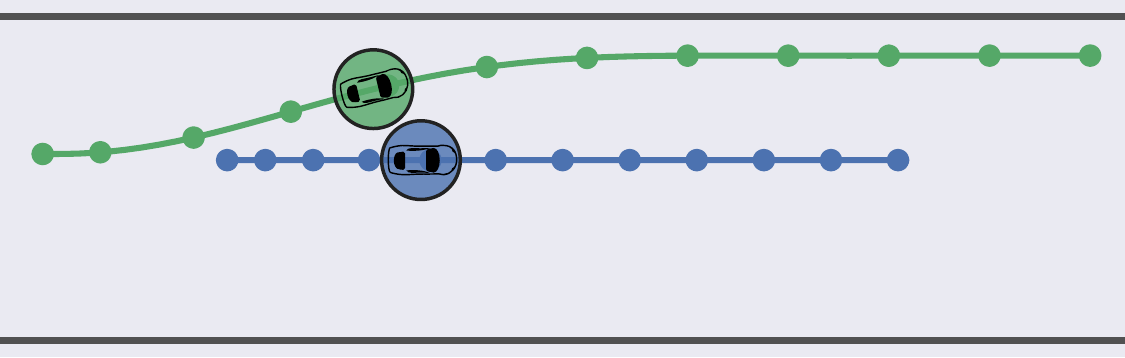}
    \mygriditem{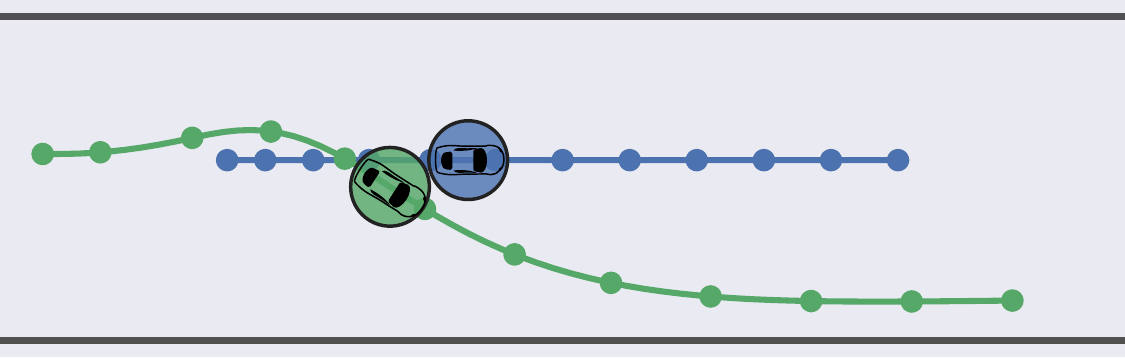}\\
    \vspace{-0.2em}%
    \myrowlabel{IBR}
    
    \vspace{0.6em}
    \mygriditem{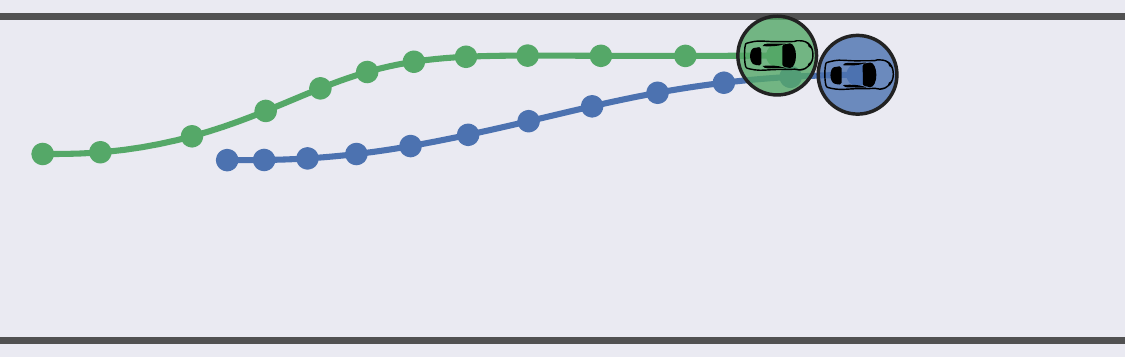}
    \mygriditem{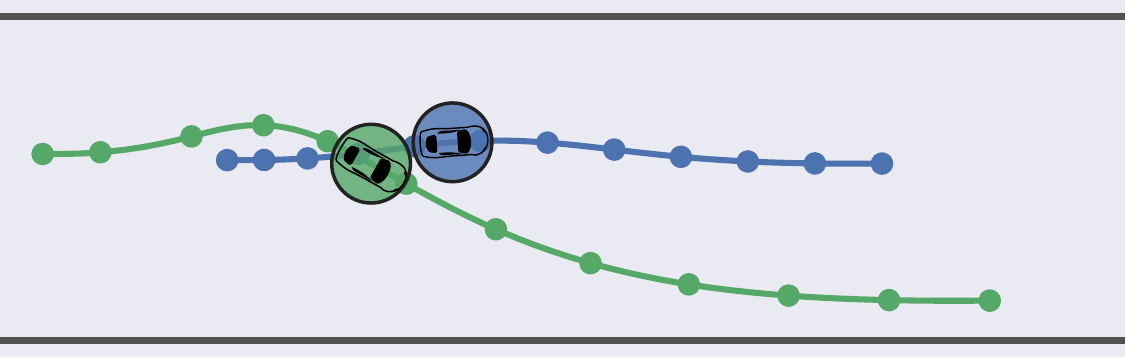}\\
    \vspace{-0.2em}%
    \myrowlabel{iLQGames}
    
    \vspace{0.6em}
    \mygriditem{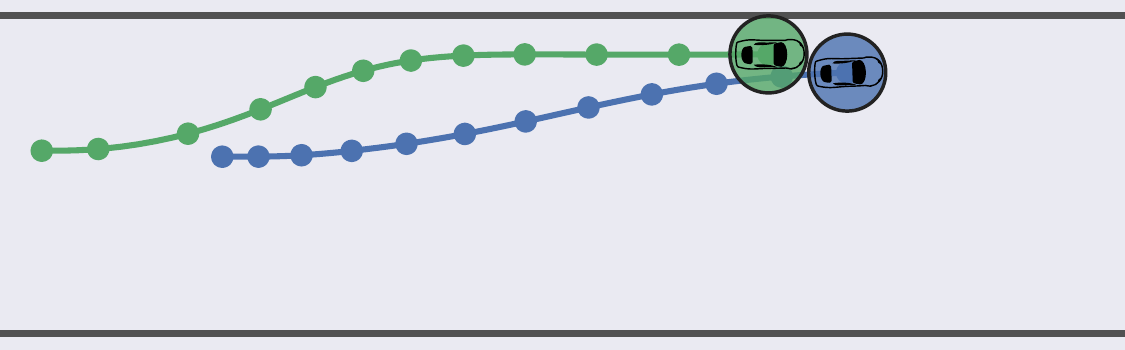}
    \mygriditem{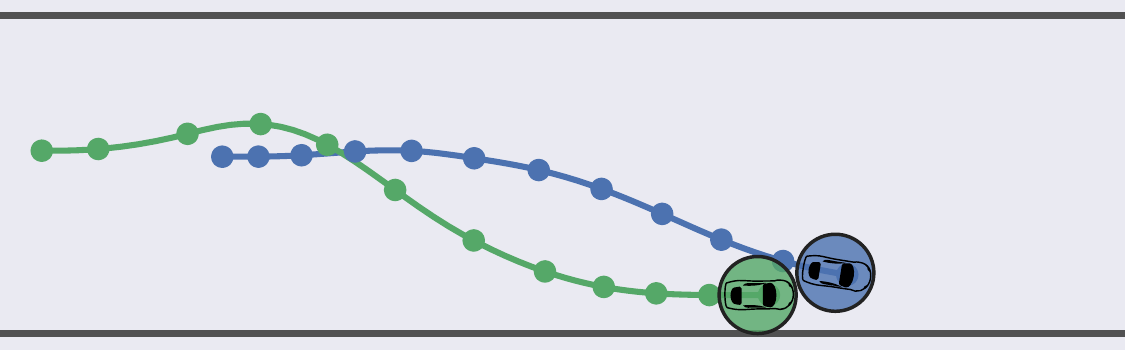}\\
    \vspace{-0.2em}%
    \myrowlabel{MMELQGames}
    
    \vspace{0.1em}
    \caption{Comparison of the trajectories resulting from running IBR (\textbf{top}), iLQGames (\textbf{middle}) and MMELQGames (\textbf{bottom}) in MPC for the lead (blue) agent.
    IBR is used for the rear (green) agent in all cases.
    We test on the case where the rear agent tries to overtake above (\textbf{left}) and below (\textbf{right}) the lead agent.
    Colored circles are added every fifth timestep to indicate the position of each agent.
    Black circles indicating the radius of each agent denote the timestep where the agents are \textbf{closest} to each other.
    % Due to the lower velocity constraints, the blue lead agent must block the green rear agent to remain in front.
    % Of the three algorithms tested, only MMELQGames is able to stay in the lead in both cases.
    }
    \label{fig:sim:overtake}
% \end{figure*}
\end{figure}
}

\subsection{Multi-agent collision avoidance}
We begin with a simple collision avoidance game between agents with unicycle dynamics to highlight the multimodality of our algorithm.
Each agent's objective function composes of a quadratic task and a soft cost for colliding with other agents.
As there is no ego agent in this example, we assume that the ego agent knows the mode $a$ from the start and choose the prior for $a$ to be of the form \eqref{eq:fair_mode_prior}.
The results are shown in \cref{fig:sim:swap_2}.
While \citet{mehr2021maximum} claims that the stochastic policy from MELQGames is able to 
result in multimodal behaviors, our results show that this is not the case.
Since the mean control from MELQGames is equivalent to the linear feedback controller from iLQR, the resulting behavior should also result in a ``tube'' around the mean trajectory, which is indeed what we see on the top row of \cref{fig:sim:swap_2}.
On the other hand, the MMELQGames algorithm is able to properly handle the multimodality of this example and sucessfully find both local Nash equilibria of the game.

% \begin{figure*}
\begin{figure}
    \centering
    \begin{minipage}[b]{.49\linewidth}
        \includegraphics[width=\linewidth]{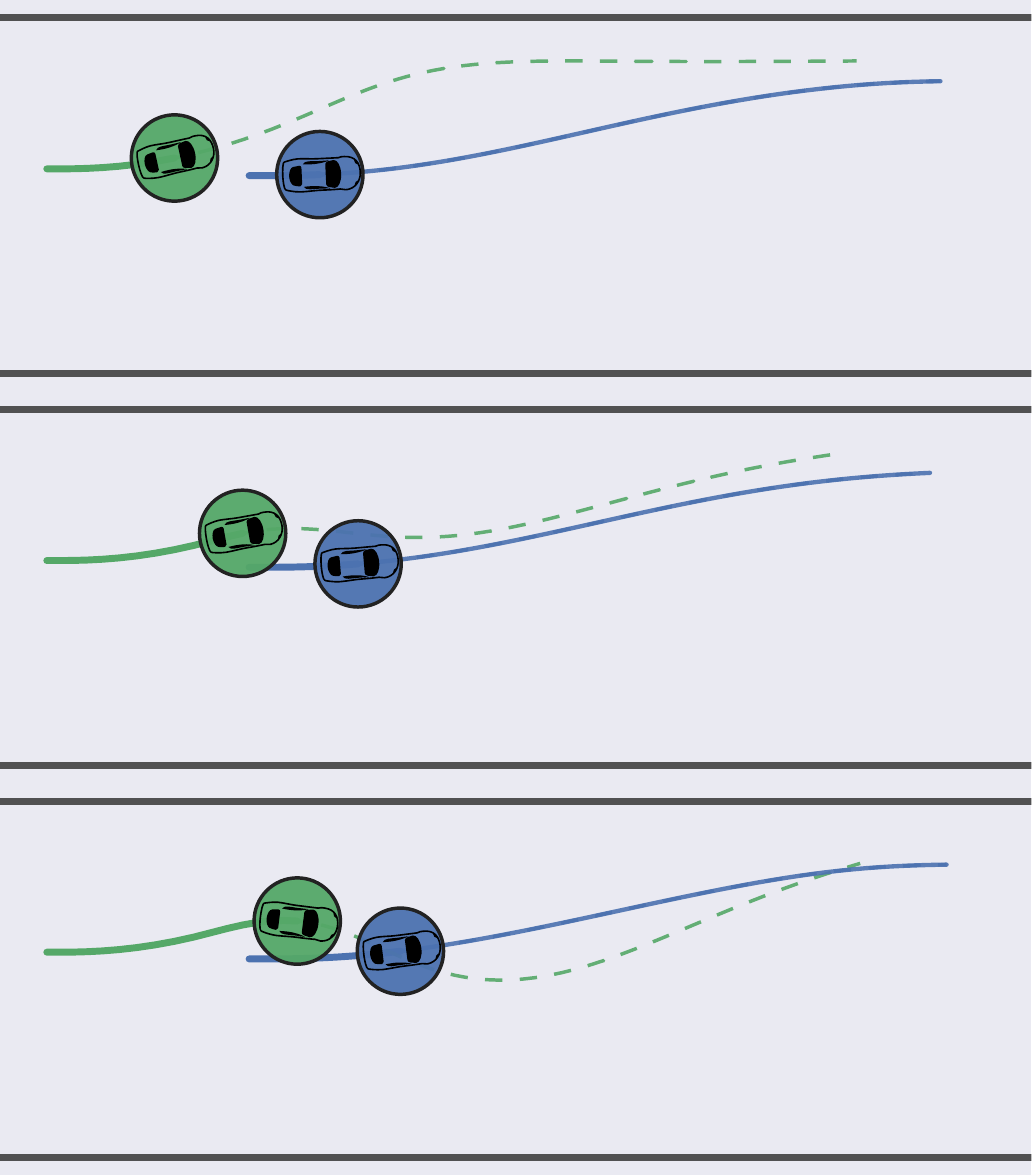}
        \subcaption{iLQGames}
    \end{minipage}
    \begin{minipage}[b]{.49\linewidth}
        \includegraphics[width=\linewidth]{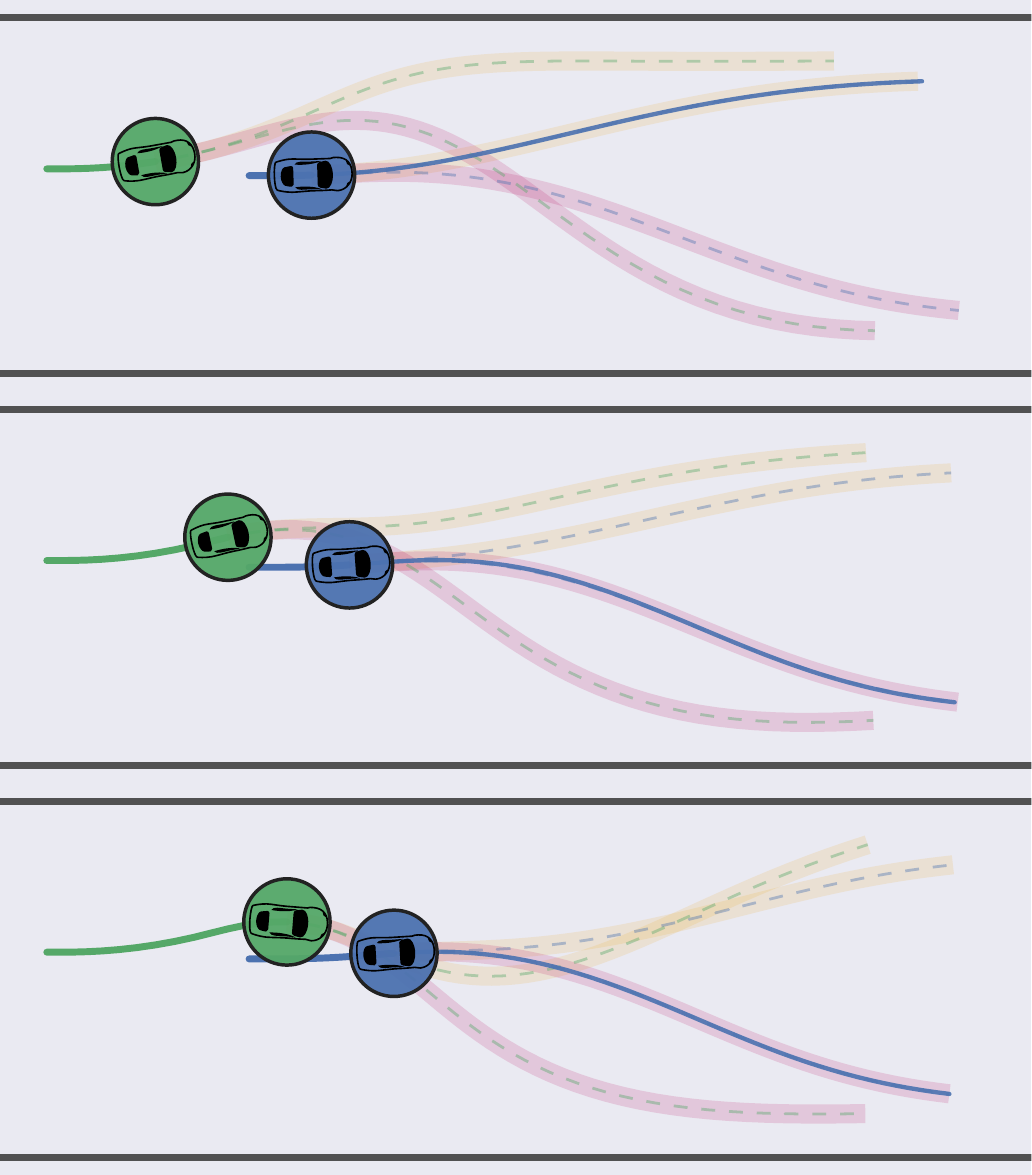}
        \subcaption{MMELQGames}
    \end{minipage}
    
    \caption{
    Snapshot of iLQGames (\textbf{left}) and MMELQGames (\textbf{right}) at three timesteps after the rear agent starts turning towards the bottom mode.
    The ego agent's planned trajectory is shown as a solid blue line,
    with the predicted trajectories of the non-ego agent shown as a dashed line.
    % The dashed lines show the predicted trajectories of the rear agent.
    % with the black circle visualizing the radius of each agent at the current timestep.
    iLQGames incorrectly predicts that the rear agent will overtake from the top due to the algorithm being stuck in a local minima, while MMELQGames is able to successfully infer that the mode has changed.
    For MMELQGames, the different colors indicate the different modes.
    % with a consistent color scheme across time chosen for better clarity.
    }
    \label{fig:sim:me_wrong}
\end{figure}
% \end{figure*}

\subsection{Game Theoretic Autonomous Racing}
We next consider a two-agent racing scenario where the lead vehicle has index $i=1$ and the rear vehicle has index $i=2$.
The objective of each agent is to maximize the difference between its own progress $s^i$ and the progress of the other agent $s^{\negi}$ (first term)
under quadratic control cost (second term), subject to the following
% {\color{red} COMMENT: Be more explicit about the definition of $ s$. Also below dont you to have a quadratic function of $s $!!}
\begin{align}
    \min_{u^i} &\quad (s^{\negi} - s^i)
        + \sum_{t=1}^{T-1} \frac{1}{2}{u^i_t}\T R u^i_t, \\
    \textrm{s.t.} &\quad h^i_{\textrm{track}}(\vX, \vU) \leq 0, \\
    &\quad \mathbbm{1}_{s^i < s^{\negi}} h^i_{\textrm{collision}}(\vX) \leq 0, \\
    &\quad h^i_{\textrm{velocity}}(\vX) \leq 0.
\end{align}
where each vehicle is modeled as a unicycle with linear and angular accelerations as controls.

% {\color{red}COMMENT: Dont you need the dynamics in the constraints? }
To make the problem interesting, the lead vehicle has a lower maximum velocity than the rear vehicle to allow the rear vehicle the opportunity to overtake.
To maintain the lead, the lead vehicle must block the rear vehicle from overtaking its position.
Furthermore, to more closely model racing in real life, the collision constraint is asymmetric with the burden of respecting the constraint placed solely on the rear vehicle.
We compare a MPC version of the IBR, iLQGames and MMELQGames algorithms for the lead vehicle by planning in a receding horizon fashion.
Note that MELQGames is equivalent to iLQGames if the mean controls are used instead of sampling from the stochastic policy.
In all cases, the rear agent solves for its controls using IBR.
The width of the track means that the rear agent can choose to overtake the lead vehicle by going either above or below
resulting in multimodality.
% resulting in the rear agent's policy being bimodal where both options are locally optimal.
We test both modes in our experiments, with the results for \ac{MMELQGames} shown in \cref{fig:eyecandy} and a comparison of the algorithms shown in \cref{fig:sim:overtake}.

As expected, IBR is unable to take into account the asymmetric coupling effects between the agents.
Although the solution it does find is a local GNE, it is one which is disadvantageous for the lead agent and results in the lead agent being overtaken by the rear agent in both modes.
iLQGames is able to converge to the advantageous local GNE for the first mode and successfully finish the race ahead of the rear agent. However, the nature of the method means that it is only able to keep track of one of the two modes, resulting in the method failing to block the rear agent when the rear agent overtakes via the bottom.
Finally, the multimodal nature of MMELQGames means that it is able to keep track of both local Nash equilibria at the same time.
By using the history of the rear agent's control, the lead agent is able to infer whether rear agent is trying to overtake from the top or the bottom and then execute the corresponding control.

We also note that since MELQGames is a local method, it is prone to becoming stuck in local minima. From \cref{fig:sim:me_wrong}, even though the rear agent is clearly trying to overtake via the bottom mode, the warmstart for iLQGames means that it still predicts the rear agent will overtake from the top.
On the other hand, the multimodality of MMELQGames means that it can keep track of multiple hypothesis at the same time and hence is able to infer the correct mode using the algorithm described in \cref{subsec:bayesian_inference} and respond accordingly.

%%%%%%%%%%%%%%%%%%%%%%%%%%%%%%%%%%%%%%%%%%%%%%%%%%%%%%%%%%%%%%%%%%
%%%%%%%%%%%%%%%%%%%%%%%%%%%%%%%%%%%%%%%%%%%%%%%%%%%%%%%%%%%%%%%%%%
\section{Conclusion} 
\label{sec:conclusion}
In this paper, we have proposed a constrained \ac{MaxEnt} dynamic game formulation and presented an algorithm that can solve for multiple modes of the corresponding \ac{GNE}.
We demonstrate its advantages over existing unimodal algorithms in the examples of multi-agent collision avoidance and autonomous racing. 

While we have explored the use of constraints on the mean control in our formulation, it may be interesting to look at different ways of including constraints in the \ac{MaxEnt} dynamic game formulation such as via chance constraints or constraints that hold almost-surely.
% Another possible extension for this work is to incorporate belief space and delayed dynamics into the game.

% Another future direction is to relax the assumption that other agents have a trivial belief.
% There are many situations in real life where interactions happen not in the real states of agents but rather through the \textit{belief state} of other agents.
% It will be interesting to consider more complicated games with incomplete information where the non-ego agents have higher order beliefs that are nontrivial.

Finally, though our algorithm identifies multimodal behavior by exploring the state space, we do not provide any guarantees for how thorough this exploration is or whether there are additional modes which have not been discovered.
Quantifying this via uncertainty quantification tools such as Gaussian Processes can provide more structured methods of discovering different modalities in multi-agent interactions.

%%%%%%%%%%%%%%%%%%%%%%%%%%%%%%%%%%%%%%%%%%%%%%%%%%%%%%%%%%%%%%%%%%
%%%%%%%%%%%%%%%%%%%%%%%%%%%%%%%%%%%%%%%%%%%%%%%%%%%%%%%%%%%%%%%%%%
\section*{Acknowledgments}

\bibliographystyle{plainnat}
\bibliography{references}

\ifx\joinedsupplementary\undefined
\else
    \begin{appendices}
    \onecolumn

\crefalias{section}{appendix}

% Redefine section to reset equation number to 0.
\makeatletter
\let\origsection\section
\renewcommand\section{\@ifstar{\starsection}{\nostarsection}}

\newcommand\nostarsection[1]
{\sectionprelude\origsection{#1}\sectionpostlude}

\newcommand\starsection[1]
{\sectionprelude\origsection*{#1}\sectionpostlude}

\newcommand\sectionprelude{%
}

% Reset the equation number at the beginning of each section.
\newcommand\sectionpostlude{%
  \setcounter{equation}{0}
}
\makeatother

% \renewcommand{\thesection}{SM-\arabic{section}} 

% Reference equations by the Appendix number + equation number. ex. (A.1).
\renewcommand{\theequation}{\thesection.\arabic{equation}}

%%%%%%%%%%%%%%%%%%%%%%%%%%%%%%%%%%%%%%%%%%%%%%%%%%%%%%%%%
\newpage
\section{Expression for Partial Derivatives of $Q^i$}
\label{sm:Q_derivs}
Let $Q^i$ denote the terms inside the expectation in Bellman's equation \eqref{eq:dp:raw_bellman}:
\begin{equation}
    Q^i(\vx, \vu) \coloneqq l^i(\vx, \vu) + {V^i}'( f(\vx, \vu) ).
\end{equation}
Suppose that the value function at the next timestep ${V^i}'$ is quadratic with form
\begin{equation}
    {V^i}'(\vx) = (\bar{V}^i)' + (V^i)'_x \vdx + \frac{1}{2} \vdx\T (V^i)'_{xx} \vdx
\end{equation}
Using a quadratic approximation of the costs and
linear approximation of the dynamics \eqref{eq:me:quad_cost_lin_dyn} then yields
\begin{align}
    Q^i(\vx, \vu) &\approx
    l^i(\vxbar, \vubar)
    + \begin{bmatrix}l^i_x \\ l^i_u\end{bmatrix}\T
      \begin{bmatrix}\vdx \\ \vdu\end{bmatrix}
    + \frac{1}{2}
      \begin{bmatrix}\vdx \\ \vdu\end{bmatrix}\T
      \begin{bmatrix}l^i_{xx} & l^i_{xu} \\ l^i_{ux} & l^i_{uu} \end{bmatrix}
      \begin{bmatrix}\vdx \\ \vdu\end{bmatrix} \\
     &\quad + \Big( f(\vxbar, \vubar) + f_x \vdx + f_u \vdu \Big)\T
     \Big( (\bar{V}^i)' + (V^i)'_x \vdx + \frac{1}{2} \vdx\T (V^i)'_{xx} \vdx \Big)
     \Big( f(\vxbar, \vubar) + f_x \vdx + f_u \vdu \Big).
\end{align}
Expanding and collecting terms then results in
\begin{gather}
\begin{aligned}
    Q^i(\vx, \vu) &\approx \bar{V}^i + \delta Q^i, \\
    \bar{V}^i &\coloneqq l^i(\vxbar, \vubar) + (\bar{V}^i)'(f(\vx, \vu)),
\end{aligned} \\
    \delta Q^i \coloneqq
    \begin{bmatrix}Q^i_x \\ Q^i_u\end{bmatrix}\T \begin{bmatrix}\vdx \\ \vdu\end{bmatrix}
    + \frac{1}{2}
      \begin{bmatrix}\vdx \\ \vdu\end{bmatrix}\T
      \begin{bmatrix}Q^i_{xx} & Q^i_{xu} \\ Q^i_{ux} & Q^i_{uu} \end{bmatrix}
      \begin{bmatrix}\vdx \\ \vdu\end{bmatrix}.
\end{gather}
where the partial derivatives of $Q^i$ are as follows:
\begin{align}
    Q^i_x &= l^i_x + f_x\T (V^i)'_x \\
    Q^i_u &= l^i_u + f_u\T (V^i)'_x \\
    Q^i_{xx} &= l^i_{xx} + f_x\T (V^i)'_{xx} f_x \\
    Q^i_{ux} &= l^i_{ux} + f_u\T (V^i)'_{xx} f_x \\
    Q^i_{uu} &= l^i_{uu} + f_u\T (V^i)'_{xx} f_u
\end{align}

%%%%%%%%%%%%%%%%%%%%%%%%%%%%%%%%%%%%%%%%%%%%%%%%%%%%%%%%%
\newpage
\section{Proof of \Cref{thm:me:opt_pol}}
\label{sm:proof:thm:me:opt_pol}
We restate the theorem below for convenience.
\begin{customlemma}{1}[Optimal MELQGames Policy]
The optimal policy $\piis$ which solves \eqref{eq:me:quad_bellman} for each agent $i$ has the form
\begin{equation}
    \pi^{i*} =
    {Z^i}^{-1} \exp\Big(-\frac{\alpha}{2} (\delui - \deluis)\T Q_{\ui\ui}^i (\delui - \deluis) \Big)
\end{equation}
where $\vdus$ is the solution to the following system of equations
\begin{equation}
    0 = \hat{Q}_{uu} \vdus + Q_{ux} \vdx + Q_{u} = k + K \vdx
\end{equation}
and the matrix $\hat{Q}_{uu} \in \Rb^{n_\vu \times n_\vu}$ is obtained by vertically stacking
the row vectors $Q^i_{\ui \vu}$ for each $i \in \{ 1, \dots, N \}$:
\begin{equation}
    \hat{Q}_{uu} \coloneqq
    \begin{bmatrix}
    Q^1_{u^1 \vu} \\
    \vdots \\
    Q^N_{u^N \vu}
    \end{bmatrix}
\end{equation}
\end{customlemma}
% ===============================================================
\begin{proof}
The optimal policy for agent $i$ in \eqref{eq:me:quad_bellman} is the Gibbs distribution
\begin{equation}
    \pi^{i*}(\delui) \propto
    \exp\left( -\frac{1}{\alpha} \left[ 
        \Big( Q_{\ui}^i + Q_{\ui x}^i \vdx + Q_{\ui \unegi }^i \ExP{}{\vdunegi} \Big)\T \delui + \frac{1}{2} \delui\T \Quui \delui
    \right] \right).
\end{equation}
Since the expression inside the exponential function is quadratic in $\delui$
with the negative definite quadratic term $-\frac{1}{2\alpha} \Quui$, the optimal policy
$\pi^{i*}$ is multivariate Gaussian with covariance matrix $\Sigma = \alpha (\Quui)^{-1}$.
To find the mean of $\pi^{i*}$, we complete the square to obtain
\begin{equation} \label{eq:proof:complete_square}
    \Big( Q_{\ui}^i + Q_{\ui x}^i \vdx + Q_{\ui \unegi }^i \ExP{}{\vdunegi} \Big)\T \delui + \frac{1}{2} \delui\T \Quui = \frac{1}{2} (\delui - \deluis)\T \Quui (\delui - \deluis) - \frac{1}{2} \deluis\T \Quui \deluis
\end{equation}
where the mean $\deluis = \ExP{}{\delui}$ is defined as
\begin{equation} \label{eq:proof:deluis}
    \deluis \coloneqq -(\Quui)^{-1} (Q_{\ui}^i + Q_{\ui x}^i \vdx + Q_{\ui \unegi}^i \vdunegis)
\end{equation}
where we have used $\vdunegis \coloneqq \ExP{}{\vdunegi}$ to denote the mean control of the other agents.
If $\vdunegis$ were known (ex. in the IBR case where $\vdunegis$ is held fixed), then $\deluis$ can be solved from 
\eqref{eq:proof:deluis} alone.
However, in this case, $\vdunegis$ is not known. However, by rearranging \eqref{eq:proof:deluis} we obtain
\begin{align}
    0
    &= \Quui \deluis + Q_{\ui}^i + Q_{\ui x}^i \vdx + Q_{\ui \unegi}^i \vdunegis \label{eq:proof:split_vdus_sys_eq} \\
    &= Q_{\ui}^i + Q_{\ui x}^i \vdx + Q^i_{\ui \vu} \vdus \label{eq:proof:vdus_sys_eq_i},
\end{align}
which provides $n_{\ui}$ equations in the unknown variable $\vdus \in \Rb^{n_{\vu}}$.
Hence, by considering \eqref{eq:proof:vdus_sys_eq_i} for all agents, we obtain the following linear equation for $\vdus$:
\begin{equation}
    \hat{Q}_{uu} \vdus + Q_{ux} \vdx + Q_{u} = 0
\end{equation}
with $\hat{Q}_{uu}$ defined as the stacked matrix of $Q_{\ui \vu}$ for all agents:
\begin{equation}
    \hat{Q}_{uu} \coloneqq
    \begin{bmatrix}
    Q^1_{u^1 \vu} \\
    \vdots \\
    Q^N_{u^N \vu}
    \end{bmatrix}
\end{equation}

\end{proof}

%%%%%%%%%%%%%%%%%%%%%%%%%%%%%%%%%%%%%%%%%%%%%%%%%%%%%%%%%
\newpage
\section{Proof of \Cref{thm:me:val_fn}}
\label{sm:proof:thm:me:val_fn}
To simplify the proof, we first begin with the following technical lemma.
\begin{lemma} \label{thm:tech:control_equiv}
Suppose that $\vdus$ satisfies \eqref{eq:me:opt_pol_coupled_eq}. Then,
\begin{align}
    \Big( Q_{\unegi}^i + Q_{\unegi \vx}^i \vdx \Big)\T \vdunegis
    + \frac{1}{2} \vdunegis\T Q_{\unegi \unegi}^i \vdunegis
    -\frac{1}{2} \deluis\T \Quui \deluis
    = \Big({Q_{\vu}^i} + Q_{\vu \vx}^i \vdx \Big)\T \vdus + \frac{1}{2} \vdus\T Q_{\vu \vu}^i \vdus
\end{align}
\end{lemma}
\begin{proof}
Since $\delu$ satisfies \eqref{eq:me:opt_pol_coupled_eq},
it must thus satisfy \eqref{eq:proof:split_vdus_sys_eq}.
Hence,
\begin{equation}
    -\Quui \delui = Q_{\ui}^i + Q_{\ui x}^i \vdx + Q_{\ui \unegi}^i \vdunegis
\end{equation}
Splitting $-\frac{1}{2}$ into $\frac{1}{2} - 1$, using the above identity then simplifying:
\begin{align}
&\phantom{{}={}} \Big( Q_{\unegi}^i + Q_{\unegi \vx}^i \vdx \Big)\T \vdunegis
    + \frac{1}{2} \vdunegis\T Q_{\unegi \unegi}^i \vdunegis
    -\frac{1}{2} \deluis\T \Quui \deluis \\
&= \Big( Q_{\unegi}^i + Q_{\unegi \vx}^i \vdx \Big)\T \vdunegis
    + \frac{1}{2} \vdunegis\T Q_{\unegi \unegi}^i \vdunegis
    +\frac{1}{2} \deluis\T \Quui \deluis - \deluis\T \Quui \delui \\
&= \Big( Q_{\unegi}^i + Q_{\unegi \vx}^i \vdx \Big)\T \vdunegis
    + \frac{1}{2} \vdunegis\T Q_{\unegi \unegi}^i \vdunegis
    +\frac{1}{2} \deluis\T \Quui \deluis
    + \deluis\T (Q_{\ui}^i + Q_{\ui x}^i \vdx + Q_{\ui \unegi}^i \vdunegis) \\
&= \Big({Q_{\vu}^i} + Q_{\vu \vx}^i \vdx \Big)\T \vdus + \frac{1}{2} \vdus\T Q_{\vu \vu}^i \vdus
\end{align}
\end{proof}

We now restate \cref{thm:me:val_fn} below for convenience.
\begin{customlemma}{2}[MELQGames Value Function Update]
Suppose the infimum in the Bellman equation \eqref{eq:dp:raw_bellman}
is solved with policy $\pi^{i*}$ with mean $\vdus = \vk + \vK \vdx$ according
to \cref{thm:me:opt_pol}.
Then, the value function using $\vtildedu$ for agent $i$ has the form
\begin{align} 
    V^i(\vx)
    &= \ExP{\vdu \sim \tilde{\pi}}{
        l^i(\vx, \vu) + {V^i}'( f(\vx, \vu) 
    } - \alpha H[\tilde{\pi}^i] \label{eq:proof:bellman_thing} \\
    &=\Big( V^i + V_H^i \Big) + {V_x^i}\T \vdx + \frac{1}{2} \vdx\T V_{xx}^i \vdx,
\end{align}
where the terms $V^i$, $V_{H}^i$, $V_x^i$ and $V_{xx}^i$ have the form
\begin{align}
V^i
    &= \bar{V}^i + Q_u^i \vk + \frac{1}{2} \vk\T Q_{uu}^i \vk \\
\begin{split}
    V_H^i &= 
        \frac{\alpha}{2} \Big( \log \abs{ Q_{\ui\ui}^i } - n_u \log (2\pi \alpha) \Big) \\
        &\quad + \alpha \sum_{j=1, j \not= i}^N
            \tr\left[ (Q^j_{u^j, u^j})^{-1} Q^i_{u^j, u^j} \right]
\end{split} \\
V_x^i
    &= Q_x^i + \vK\T Q_{uu}^i \vk + \vK\T Q_u^i + Q_{xu}^i \vk \\
V_{xx}^i
    &= Q_{xx}^i + \vK\T Q_{uu}^i \vK + \vK\T Q_{ux}^i + Q_{xu}^i \vK 
\end{align}
\end{customlemma}
% ======================================
\begin{proof}
Let $\mathcal{I}^i$ denote the following integral.
\begin{equation}
\mathcal{I}^i 
\coloneqq \int \exp\left(-\frac{1}{\alpha}\left[
    Q_{\ui}^i + Q_{\ui x}^i \vdx
    + Q_{\ui \unegi }^i \ExP{}{\vdunegi} \Big)\T \delui \\
    + \frac{1}{2}\delui\T Q_{u^i u^i}^i \delui
\right]\right) \ddelui
\end{equation}
Then, completing the square as in \eqref{eq:proof:complete_square} and simplifying, we obtain
\begin{align}
\mathcal{I}^i
&= \exp\left(-\frac{1}{\alpha} \left[-\frac{1}{2}\deluis\T \Quui \deluis \right]\right)
    \int \exp\left(-\frac{1}{\alpha} \left[
        \frac{1}{2} (\delui - \deluis)\T \Quui (\delui - \deluis)
    \right]\right) \ddelui \\
&=\exp\left(-\frac{1}{\alpha} \left[-\frac{1}{2}\deluis\T \Quui \deluis \right]\right)
    \Big( 2 \pi \Big)^{\frac{n_u}{2}} \, \Big( \abs*{ \alpha (\Quui)^{-1}) } \Big)^{\frac{1}{2}} \\
&= \exp\left(-\frac{1}{\alpha} \left[-\frac{1}{2}\deluis\T \Quui \deluis \right]\right)
    \frac{ \Big( 2 \pi \alpha \Big)^{\frac{n_u}{2}} }{ \abs*{ \Quui }^{\frac{1}{2}} } \\
&= \exp\left(-\frac{1}{\alpha} \left[
    -\frac{1}{2}\deluis\T \Quui \deluis
    + \frac{\alpha}{2} \Big( \ln \abs{\Quui} - n_{\ui} \ln( 2 \pi \alpha) \Big)
\right]\right)
\end{align}
Then, the partition function $Z^i$ \eqref{eq:dp:Z_def} takes the form
\begin{align}
Z^i
&= \int \exp\Big( -\frac{1}{\alpha} \ExP{\vunegi}{ {V^i}'(f(\vx,\vu)) + l^i(\vx, \vu) } \Big) \dui \\
%%%%
\begin{split}
&= \exp\bigg(-\frac{1}{\alpha}\Big[
    \bar{V}^i
    + {Q^i_{\vx}}\T \vdx + \frac{1}{2} \vdx\T Q^i_{\vx \vx} \vdx
    + \Big( Q^i_{\vunegi} + Q^i_{\vunegi \vx} \vdx \Big)\T \vdunegis
    + \frac{1}{2} \vdunegis\T Q_{\unegi \unegi}^i \vdunegis \\
&\qquad\qquad + \alpha \sum_{j=1, j \not= i}^N
            \tr\left[ (Q^j_{u^j, u^j})^{-1} Q^i_{u^j, u^j} \right]
\Big]\bigg) \\
&\quad \int \exp\Big( -\frac{1}{\alpha} \left[
     \Big( Q_{\ui}^i + Q_{\ui x}^i \vdx
    + Q_{\ui \unegi }^i \ExP{}{\vdunegi} \Big)\T \delui
    + \frac{1}{2}\delui\T Q_{u^i u^i}^i \delui
\right]\Big) \ddelui \\
\end{split} \\
%%%%
\begin{split}
&= \exp\bigg(-\frac{1}{\alpha}\Big[
    \bar{V}^i
    + {Q^i_{\vx}}\T \vdx + \frac{1}{2} \vdx\T Q^i_{\vx \vx} \vdx
    + \Big( Q^i_{\vunegi} + Q^i_{\vunegi \vx} \vdx \Big)\T \vdunegis
    + \frac{1}{2} \vdunegis\T Q_{\unegi \unegi}^i \vdunegis \\
&\qquad\qquad + \alpha \sum_{j=1, j \not= i}^N
            \tr\left[ (Q^j_{u^j, u^j})^{-1} Q^i_{u^j, u^j} \right]
\Big]\bigg) \; \mathcal{I}
\end{split} \\
%%%%
&= \exp\left(-\frac{1}{\alpha} V^i(\vx) \right)
\end{align}
where the trace term comes from taking the expectation of the quadratic term in $\vdunegi$ and
\begin{equation}
\begin{split}
V^i(\vx) &=
(\bar{V}^i)'
    + {Q^i_{\vx}}\T \vdx + \frac{1}{2} \vdx\T Q^i_{\vx \vx} \vdx
    + \Big( Q^i_{\vunegi} + Q^i_{\vunegi \vx} \vdx \Big)\T \vdunegis
    + \frac{1}{2} \vdunegis\T Q_{\unegi \unegi}^i \vdunegis \\
&\qquad -\frac{1}{2}\deluis\T \Quui \deluis
    + \frac{\alpha}{2} \Big( \ln \abs{\Quui} - n_{\ui} \ln( 2 \pi \alpha) \Big)
    + \alpha \sum_{j=1, j \not= i}^N
            \tr\left[ (Q^j_{u^j, u^j})^{-1} Q^i_{u^j, u^j} \right]
\end{split}
\end{equation}
Applying \cref{thm:tech:control_equiv}, using the definition of $\vdus$ and collecting terms then yields
\begin{align}
\begin{split}
V^i(\vx)
&=  (\bar{V}^i)'
    + {Q^i_{\vx}}\T \vdx + \frac{1}{2} \vdx\T Q^i_{\vx \vx} \vdx
    + \Big({Q_{\vu}^i} + Q_{\vu \vx}^i \vdx \Big)\T \vdus \\
&\qquad + \frac{1}{2} \vdus\T Q_{\vu \vu}^i \vdus
    + \frac{\alpha}{2} \Big( \ln \abs{\Quui} - n_{\ui} \ln( 2 \pi \alpha) \Big)
    + \alpha \sum_{j=1, j \not= i}^N
            \tr\left[ (Q^j_{u^j, u^j})^{-1} Q^i_{u^j, u^j} \right]
\end{split} \\
&= \Big( V^i + V_H^i \Big) + {V_x^i}\T \vdx + \frac{1}{2} \vdx\T V_{xx}^i \vdx
\end{align}
where
\begin{align}
V^i
    &= \bar{V}^i + Q_u^i \vk + \frac{1}{2} \vk\T Q_{uu}^i \vk \\
\begin{split}
    V_H^i &= 
        \frac{\alpha}{2} \Big( \log \abs{ Q_{\ui\ui}^i } - n_u \log (2\pi \alpha) \Big) \\
        &\quad + \alpha \sum_{j=1, j \not= i}^N
            \tr\left[ (Q^j_{u^j, u^j})^{-1} Q^i_{u^j, u^j} \right]
\end{split} \\
V_x^i
    &= Q_x^i + \vK\T Q_{uu}^i \vk + \vK\T Q_u^i + Q_{xu}^i \vk \\
V_{xx}^i
    &= Q_{xx}^i + \vK\T Q_{uu}^i \vK + \vK\T Q_{ux}^i + Q_{xu}^i \vK 
\end{align}
\end{proof}

\newpage
\section{Proof of \Cref{lemma:aug_lang_cvg}}
\label{sm:proof:lemma:aug_lang_cvg}
We restate \cref{lemma:aug_lang_cvg} below for convenience.
\begin{customlemma}{3}
Suppose that the augmented Lagrangian MELQGames converges to a tuple $(\pi^*, \lambda^i_j, \rho)$ for some $\rho > 0$ which satisfies dual feasibility of $\lambda$, \ac{LICQ} and \ac{SOSC}.
Then, $\pi^*$ is a \ac{GNE} for \eqref{eq:pf:gnep}.
\end{customlemma}
\begin{proof}
Since the solution satisfies \ac{LICQ}, \ac{SOSC} and $\lambda > 0$,
each $\piis$ is a local minima for its corresponding constrained optimal control problem \eqref{eq:pf:gnep}.
Thus, there does not exist any feasible control $\pi \in \Pi(\pinegis)$ within a neighborhood of $\piis$ that can achieve a lower cost.
Hence, $\pi^*$ satisfies \eqref{eq:pf:gne_nec} and is a \ac{GNE}.
\end{proof}

%%%%%%%%%%%%%%%%%%%%%%%%%%%%%%%%%%%%%%%%%%%%%%%%%%%%%%%%%
\newpage
\section{Proof of \Cref{thm:mme:common_belief}}
\label{sm:sec:proof:mme:common_belief}
Before proving \cref{thm:mme:common_belief}, we first introduce the following theorem from
\citet{maschler2013game} which presents a sufficient condition for an event $A$ to be common belief
among the players at a particular state of the world $\omega$:

\begin{theorem} \label{sm:thm:suff_cond_common_belief}
Let $N$ denote the number of agents in the game,
$\omega \in Y$ be a state of the world and $A \subseteq Y$ be an event satisfying the following two conditions:
\begin{align}
    p_i( A | \omega ) &= 1, \quad \forall i \in \{ 1, \dots, N \} \\
    p_i( A | \omega' ) &= 1, \quad \forall i \in \{ 1, \dots, N \},
    \; \forall \omega' \in A
\end{align}
Then, $A$ is common belief among the agents at $\omega$.
\end{theorem}

Using \cref{sm:thm:suff_cond_common_belief}, we can now prove 
\eqref{thm:mme:common_belief}, which we restate below for convenience.
\begin{customlemma}{4}
    Let $E = \{ \omega_{a,s} \}_{a=1}^A$ denote the event that the ego agent knows the correct mode with probability 1.
    Then, for all $i \not= 1$, $\omega \in B_i E$, i.e. all non-ego agents believe that the ego agent knows the mode perfectly.
    Furthermore, it is common belief that $B_i E_a$ for all $i \not = 1$.
    In particular, the ego agent (correctly) believes that all non-ego agents (incorrectly) believes that it knows the correct mode.
\end{customlemma}
% ------
\begin{proof}
First, note that at $\omega \in E$, all agents, including the ego agent, know the true mode.
Using the definition of the belief operator $B_i$ \eqref{eq:def:belief},
we have that for $i \not = 1$:
\begin{equation}
    B_i E = \{ \omega \in Y | p_i(E|\omega) = 1\} = Y.
\end{equation}
In other words, non-ego agent $i$ believes that $E$ will attains no matter what the true world state $\omega_*$ is.
Since $p_i(Y|\omega) = 1$ for any $\omega \in Y$, $B_i E = Y$ is thus common belief among the agents at all world states.
Hence, by the definition of common belief, we have that
\begin{equation}
  B_1 B_i E = B_1 Y = Y  
\end{equation}
i.e., the ego agent believes that the non-ego agents believe it knows the true mode.
On the other hand, we have that
\begin{align}
    B_1 E &= \{ \omega \in Y | p_1(E | \omega) = 1\} = E, \\
    B_1 E^\complement &= \{ \omega \in Y | p_1(E^\complement|\omega) = 1\}
    = E^\complement.
\end{align}
where $E^\complement$ denotes the complement of $E$ in $Y$.
In other words, when $\omega \in E^\complement$, the ego agent (correctly) believes
that it does not know the true mode while simultaneously knowing that the non-ego agents (incorrectly) believe that it knows the true mode.
\end{proof}

    \end{appendices}
\fi

\end{document}

% --- supplement: paper/supplementary.tex ---

\maketitle
\addtolength{\cftsubsecnumwidth}{20pt}
\addtolength{\cftsecnumwidth}{20pt}
\tableofcontents

\onecolumn

\crefalias{section}{appendix}

% Redefine section to reset equation number to 0.
\makeatletter
\let\origsection\section
\renewcommand\section{\@ifstar{\starsection}{\nostarsection}}

\newcommand\nostarsection[1]
{\sectionprelude\origsection{#1}\sectionpostlude}

\newcommand\starsection[1]
{\sectionprelude\origsection*{#1}\sectionpostlude}

\newcommand\sectionprelude{%
}

% Reset the equation number at the beginning of each section.
\newcommand\sectionpostlude{%
  \setcounter{equation}{0}
}
\makeatother

% \renewcommand{\thesection}{SM-\arabic{section}} 

% Reference equations by the Appendix number + equation number. ex. (A.1).
\renewcommand{\theequation}{\thesection.\arabic{equation}}

%%%%%%%%%%%%%%%%%%%%%%%%%%%%%%%%%%%%%%%%%%%%%%%%%%%%%%%%%
\newpage
\section{Expression for Partial Derivatives of $Q^i$}
\label{sm:Q_derivs}
Let $Q^i$ denote the terms inside the expectation in Bellman's equation \eqref{eq:dp:raw_bellman}:
\begin{equation}
    Q^i(\vx, \vu) \coloneqq l^i(\vx, \vu) + {V^i}'( f(\vx, \vu) ).
\end{equation}
Suppose that the value function at the next timestep ${V^i}'$ is quadratic with form
\begin{equation}
    {V^i}'(\vx) = (\bar{V}^i)' + (V^i)'_x \vdx + \frac{1}{2} \vdx\T (V^i)'_{xx} \vdx
\end{equation}
Using a quadratic approximation of the costs and
linear approximation of the dynamics \eqref{eq:me:quad_cost_lin_dyn} then yields
\begin{align}
    Q^i(\vx, \vu) &\approx
    l^i(\vxbar, \vubar)
    + \begin{bmatrix}l^i_x \\ l^i_u\end{bmatrix}\T
      \begin{bmatrix}\vdx \\ \vdu\end{bmatrix}
    + \frac{1}{2}
      \begin{bmatrix}\vdx \\ \vdu\end{bmatrix}\T
      \begin{bmatrix}l^i_{xx} & l^i_{xu} \\ l^i_{ux} & l^i_{uu} \end{bmatrix}
      \begin{bmatrix}\vdx \\ \vdu\end{bmatrix} \\
     &\quad + \Big( f(\vxbar, \vubar) + f_x \vdx + f_u \vdu \Big)\T
     \Big( (\bar{V}^i)' + (V^i)'_x \vdx + \frac{1}{2} \vdx\T (V^i)'_{xx} \vdx \Big)
     \Big( f(\vxbar, \vubar) + f_x \vdx + f_u \vdu \Big).
\end{align}
Expanding and collecting terms then results in
\begin{gather}
\begin{aligned}
    Q^i(\vx, \vu) &\approx \bar{V}^i + \delta Q^i, \\
    \bar{V}^i &\coloneqq l^i(\vxbar, \vubar) + (\bar{V}^i)'(f(\vx, \vu)),
\end{aligned} \\
    \delta Q^i \coloneqq
    \begin{bmatrix}Q^i_x \\ Q^i_u\end{bmatrix}\T \begin{bmatrix}\vdx \\ \vdu\end{bmatrix}
    + \frac{1}{2}
      \begin{bmatrix}\vdx \\ \vdu\end{bmatrix}\T
      \begin{bmatrix}Q^i_{xx} & Q^i_{xu} \\ Q^i_{ux} & Q^i_{uu} \end{bmatrix}
      \begin{bmatrix}\vdx \\ \vdu\end{bmatrix}.
\end{gather}
where the partial derivatives of $Q^i$ are as follows:
\begin{align}
    Q^i_x &= l^i_x + f_x\T (V^i)'_x \\
    Q^i_u &= l^i_u + f_u\T (V^i)'_x \\
    Q^i_{xx} &= l^i_{xx} + f_x\T (V^i)'_{xx} f_x \\
    Q^i_{ux} &= l^i_{ux} + f_u\T (V^i)'_{xx} f_x \\
    Q^i_{uu} &= l^i_{uu} + f_u\T (V^i)'_{xx} f_u
\end{align}

%%%%%%%%%%%%%%%%%%%%%%%%%%%%%%%%%%%%%%%%%%%%%%%%%%%%%%%%%
\newpage
\section{Proof of \Cref{thm:me:opt_pol}}
\label{sm:proof:thm:me:opt_pol}
We restate the theorem below for convenience.
\begin{customlemma}{1}[Optimal MELQGames Policy]
The optimal policy $\piis$ which solves \eqref{eq:me:quad_bellman} for each agent $i$ has the form
\begin{equation}
    \pi^{i*} =
    {Z^i}^{-1} \exp\Big(-\frac{\alpha}{2} (\delui - \deluis)\T Q_{\ui\ui}^i (\delui - \deluis) \Big)
\end{equation}
where $\vdus$ is the solution to the following system of equations
\begin{equation}
    0 = \hat{Q}_{uu} \vdus + Q_{ux} \vdx + Q_{u} = k + K \vdx
\end{equation}
and the matrix $\hat{Q}_{uu} \in \Rb^{n_\vu \times n_\vu}$ is obtained by vertically stacking
the row vectors $Q^i_{\ui \vu}$ for each $i \in \{ 1, \dots, N \}$:
\begin{equation}
    \hat{Q}_{uu} \coloneqq
    \begin{bmatrix}
    Q^1_{u^1 \vu} \\
    \vdots \\
    Q^N_{u^N \vu}
    \end{bmatrix}
\end{equation}
\end{customlemma}
% ===============================================================
\begin{proof}
The optimal policy for agent $i$ in \eqref{eq:me:quad_bellman} is the Gibbs distribution
\begin{equation}
    \pi^{i*}(\delui) \propto
    \exp\left( -\frac{1}{\alpha} \left[ 
        \Big( Q_{\ui}^i + Q_{\ui x}^i \vdx + Q_{\ui \unegi }^i \ExP{}{\vdunegi} \Big)\T \delui + \frac{1}{2} \delui\T \Quui \delui
    \right] \right).
\end{equation}
Since the expression inside the exponential function is quadratic in $\delui$
with the negative definite quadratic term $-\frac{1}{2\alpha} \Quui$, the optimal policy
$\pi^{i*}$ is multivariate Gaussian with covariance matrix $\Sigma = \alpha (\Quui)^{-1}$.
To find the mean of $\pi^{i*}$, we complete the square to obtain
\begin{equation} \label{eq:proof:complete_square}
    \Big( Q_{\ui}^i + Q_{\ui x}^i \vdx + Q_{\ui \unegi }^i \ExP{}{\vdunegi} \Big)\T \delui + \frac{1}{2} \delui\T \Quui = \frac{1}{2} (\delui - \deluis)\T \Quui (\delui - \deluis) - \frac{1}{2} \deluis\T \Quui \deluis
\end{equation}
where the mean $\deluis = \ExP{}{\delui}$ is defined as
\begin{equation} \label{eq:proof:deluis}
    \deluis \coloneqq -(\Quui)^{-1} (Q_{\ui}^i + Q_{\ui x}^i \vdx + Q_{\ui \unegi}^i \vdunegis)
\end{equation}
where we have used $\vdunegis \coloneqq \ExP{}{\vdunegi}$ to denote the mean control of the other agents.
If $\vdunegis$ were known (ex. in the IBR case where $\vdunegis$ is held fixed), then $\deluis$ can be solved from 
\eqref{eq:proof:deluis} alone.
However, in this case, $\vdunegis$ is not known. However, by rearranging \eqref{eq:proof:deluis} we obtain
\begin{align}
    0
    &= \Quui \deluis + Q_{\ui}^i + Q_{\ui x}^i \vdx + Q_{\ui \unegi}^i \vdunegis \label{eq:proof:split_vdus_sys_eq} \\
    &= Q_{\ui}^i + Q_{\ui x}^i \vdx + Q^i_{\ui \vu} \vdus \label{eq:proof:vdus_sys_eq_i},
\end{align}
which provides $n_{\ui}$ equations in the unknown variable $\vdus \in \Rb^{n_{\vu}}$.
Hence, by considering \eqref{eq:proof:vdus_sys_eq_i} for all agents, we obtain the following linear equation for $\vdus$:
\begin{equation}
    \hat{Q}_{uu} \vdus + Q_{ux} \vdx + Q_{u} = 0
\end{equation}
with $\hat{Q}_{uu}$ defined as the stacked matrix of $Q_{\ui \vu}$ for all agents:
\begin{equation}
    \hat{Q}_{uu} \coloneqq
    \begin{bmatrix}
    Q^1_{u^1 \vu} \\
    \vdots \\
    Q^N_{u^N \vu}
    \end{bmatrix}
\end{equation}

\end{proof}

%%%%%%%%%%%%%%%%%%%%%%%%%%%%%%%%%%%%%%%%%%%%%%%%%%%%%%%%%
\newpage
\section{Proof of \Cref{thm:me:val_fn}}
\label{sm:proof:thm:me:val_fn}
To simplify the proof, we first begin with the following technical lemma.
\begin{lemma} \label{thm:tech:control_equiv}
Suppose that $\vdus$ satisfies \eqref{eq:me:opt_pol_coupled_eq}. Then,
\begin{align}
    \Big( Q_{\unegi}^i + Q_{\unegi \vx}^i \vdx \Big)\T \vdunegis
    + \frac{1}{2} \vdunegis\T Q_{\unegi \unegi}^i \vdunegis
    -\frac{1}{2} \deluis\T \Quui \deluis
    = \Big({Q_{\vu}^i} + Q_{\vu \vx}^i \vdx \Big)\T \vdus + \frac{1}{2} \vdus\T Q_{\vu \vu}^i \vdus
\end{align}
\end{lemma}
\begin{proof}
Since $\delu$ satisfies \eqref{eq:me:opt_pol_coupled_eq},
it must thus satisfy \eqref{eq:proof:split_vdus_sys_eq}.
Hence,
\begin{equation}
    -\Quui \delui = Q_{\ui}^i + Q_{\ui x}^i \vdx + Q_{\ui \unegi}^i \vdunegis
\end{equation}
Splitting $-\frac{1}{2}$ into $\frac{1}{2} - 1$, using the above identity then simplifying:
\begin{align}
&\phantom{{}={}} \Big( Q_{\unegi}^i + Q_{\unegi \vx}^i \vdx \Big)\T \vdunegis
    + \frac{1}{2} \vdunegis\T Q_{\unegi \unegi}^i \vdunegis
    -\frac{1}{2} \deluis\T \Quui \deluis \\
&= \Big( Q_{\unegi}^i + Q_{\unegi \vx}^i \vdx \Big)\T \vdunegis
    + \frac{1}{2} \vdunegis\T Q_{\unegi \unegi}^i \vdunegis
    +\frac{1}{2} \deluis\T \Quui \deluis - \deluis\T \Quui \delui \\
&= \Big( Q_{\unegi}^i + Q_{\unegi \vx}^i \vdx \Big)\T \vdunegis
    + \frac{1}{2} \vdunegis\T Q_{\unegi \unegi}^i \vdunegis
    +\frac{1}{2} \deluis\T \Quui \deluis
    + \deluis\T (Q_{\ui}^i + Q_{\ui x}^i \vdx + Q_{\ui \unegi}^i \vdunegis) \\
&= \Big({Q_{\vu}^i} + Q_{\vu \vx}^i \vdx \Big)\T \vdus + \frac{1}{2} \vdus\T Q_{\vu \vu}^i \vdus
\end{align}
\end{proof}

We now restate \cref{thm:me:val_fn} below for convenience.
\begin{customlemma}{2}[MELQGames Value Function Update]
Suppose the infimum in the Bellman equation \eqref{eq:dp:raw_bellman}
is solved with policy $\pi^{i*}$ with mean $\vdus = \vk + \vK \vdx$ according
to \cref{thm:me:opt_pol}.
Then, the value function using $\vtildedu$ for agent $i$ has the form
\begin{align} 
    V^i(\vx)
    &= \ExP{\vdu \sim \tilde{\pi}}{
        l^i(\vx, \vu) + {V^i}'( f(\vx, \vu) 
    } - \alpha H[\tilde{\pi}^i] \label{eq:proof:bellman_thing} \\
    &=\Big( V^i + V_H^i \Big) + {V_x^i}\T \vdx + \frac{1}{2} \vdx\T V_{xx}^i \vdx,
\end{align}
where the terms $V^i$, $V_{H}^i$, $V_x^i$ and $V_{xx}^i$ have the form
\begin{align}
V^i
    &= \bar{V}^i + Q_u^i \vk + \frac{1}{2} \vk\T Q_{uu}^i \vk \\
%
\begin{split}
    V_H^i &= 
        \frac{\alpha}{2} \Big( \log \abs{ Q_{\ui\ui}^i } - n_u \log (2\pi \alpha) \Big) \\
        &\quad + \alpha \sum_{j=1, j \not= i}^N
            \tr\left[ (Q^j_{u^j, u^j})^{-1} Q^i_{u^j, u^j} \right]
\end{split} \\
%
V_x^i
    &= Q_x^i + \vK\T Q_{uu}^i \vk + \vK\T Q_u^i + Q_{xu}^i \vk \\
%
V_{xx}^i
    &= Q_{xx}^i + \vK\T Q_{uu}^i \vK + \vK\T Q_{ux}^i + Q_{xu}^i \vK 
\end{align}
\end{customlemma}
% ======================================
\begin{proof}
Let $\mathcal{I}^i$ denote the following integral.
\begin{equation}
\mathcal{I}^i 
\coloneqq \int \exp\left(-\frac{1}{\alpha}\left[
    Q_{\ui}^i + Q_{\ui x}^i \vdx
    + Q_{\ui \unegi }^i \ExP{}{\vdunegi} \Big)\T \delui \\
    + \frac{1}{2}\delui\T Q_{u^i u^i}^i \delui
\right]\right) \ddelui
\end{equation}
Then, completing the square as in \eqref{eq:proof:complete_square} and simplifying, we obtain
\begin{align}
\mathcal{I}^i
&= \exp\left(-\frac{1}{\alpha} \left[-\frac{1}{2}\deluis\T \Quui \deluis \right]\right)
    \int \exp\left(-\frac{1}{\alpha} \left[
        \frac{1}{2} (\delui - \deluis)\T \Quui (\delui - \deluis)
    \right]\right) \ddelui \\
&=\exp\left(-\frac{1}{\alpha} \left[-\frac{1}{2}\deluis\T \Quui \deluis \right]\right)
    \Big( 2 \pi \Big)^{\frac{n_u}{2}} \, \Big( \abs*{ \alpha (\Quui)^{-1}) } \Big)^{\frac{1}{2}} \\
&= \exp\left(-\frac{1}{\alpha} \left[-\frac{1}{2}\deluis\T \Quui \deluis \right]\right)
    \frac{ \Big( 2 \pi \alpha \Big)^{\frac{n_u}{2}} }{ \abs*{ \Quui }^{\frac{1}{2}} } \\
&= \exp\left(-\frac{1}{\alpha} \left[
    -\frac{1}{2}\deluis\T \Quui \deluis
    + \frac{\alpha}{2} \Big( \ln \abs{\Quui} - n_{\ui} \ln( 2 \pi \alpha) \Big)
\right]\right)
\end{align}
Then, the partition function $Z^i$ \eqref{eq:dp:Z_def} takes the form
\begin{align}
Z^i
&= \int \exp\Big( -\frac{1}{\alpha} \ExP{\vunegi}{ {V^i}'(f(\vx,\vu)) + l^i(\vx, \vu) } \Big) \dui \\
%%%%
\begin{split}
&= \exp\bigg(-\frac{1}{\alpha}\Big[
    \bar{V}^i
    + {Q^i_{\vx}}\T \vdx + \frac{1}{2} \vdx\T Q^i_{\vx \vx} \vdx
    + \Big( Q^i_{\vunegi} + Q^i_{\vunegi \vx} \vdx \Big)\T \vdunegis
    + \frac{1}{2} \vdunegis\T Q_{\unegi \unegi}^i \vdunegis \\
&\qquad\qquad + \alpha \sum_{j=1, j \not= i}^N
            \tr\left[ (Q^j_{u^j, u^j})^{-1} Q^i_{u^j, u^j} \right]
\Big]\bigg) \\
&\quad \int \exp\Big( -\frac{1}{\alpha} \left[
     \Big( Q_{\ui}^i + Q_{\ui x}^i \vdx
    + Q_{\ui \unegi }^i \ExP{}{\vdunegi} \Big)\T \delui
    + \frac{1}{2}\delui\T Q_{u^i u^i}^i \delui
\right]\Big) \ddelui \\
\end{split} \\
%%%%
\begin{split}
&= \exp\bigg(-\frac{1}{\alpha}\Big[
    \bar{V}^i
    + {Q^i_{\vx}}\T \vdx + \frac{1}{2} \vdx\T Q^i_{\vx \vx} \vdx
    + \Big( Q^i_{\vunegi} + Q^i_{\vunegi \vx} \vdx \Big)\T \vdunegis
    + \frac{1}{2} \vdunegis\T Q_{\unegi \unegi}^i \vdunegis \\
&\qquad\qquad + \alpha \sum_{j=1, j \not= i}^N
            \tr\left[ (Q^j_{u^j, u^j})^{-1} Q^i_{u^j, u^j} \right]
\Big]\bigg) \; \mathcal{I}
\end{split} \\
%%%%
&= \exp\left(-\frac{1}{\alpha} V^i(\vx) \right)
\end{align}
where the trace term comes from taking the expectation of the quadratic term in $\vdunegi$ and
\begin{equation}
\begin{split}
V^i(\vx) &=
(\bar{V}^i)'
    + {Q^i_{\vx}}\T \vdx + \frac{1}{2} \vdx\T Q^i_{\vx \vx} \vdx
    + \Big( Q^i_{\vunegi} + Q^i_{\vunegi \vx} \vdx \Big)\T \vdunegis
    + \frac{1}{2} \vdunegis\T Q_{\unegi \unegi}^i \vdunegis \\
&\qquad -\frac{1}{2}\deluis\T \Quui \deluis
    + \frac{\alpha}{2} \Big( \ln \abs{\Quui} - n_{\ui} \ln( 2 \pi \alpha) \Big)
    + \alpha \sum_{j=1, j \not= i}^N
            \tr\left[ (Q^j_{u^j, u^j})^{-1} Q^i_{u^j, u^j} \right]
\end{split}
\end{equation}
Applying \cref{thm:tech:control_equiv}, using the definition of $\vdus$ and collecting terms then yields
\begin{align}
\begin{split}
V^i(\vx)
&=  (\bar{V}^i)'
    + {Q^i_{\vx}}\T \vdx + \frac{1}{2} \vdx\T Q^i_{\vx \vx} \vdx
    + \Big({Q_{\vu}^i} + Q_{\vu \vx}^i \vdx \Big)\T \vdus \\
&\qquad + \frac{1}{2} \vdus\T Q_{\vu \vu}^i \vdus
    + \frac{\alpha}{2} \Big( \ln \abs{\Quui} - n_{\ui} \ln( 2 \pi \alpha) \Big)
    + \alpha \sum_{j=1, j \not= i}^N
            \tr\left[ (Q^j_{u^j, u^j})^{-1} Q^i_{u^j, u^j} \right]
\end{split} \\
&= \Big( V^i + V_H^i \Big) + {V_x^i}\T \vdx + \frac{1}{2} \vdx\T V_{xx}^i \vdx
\end{align}
where
\begin{align}
V^i
    &= \bar{V}^i + Q_u^i \vk + \frac{1}{2} \vk\T Q_{uu}^i \vk \\
%
\begin{split}
    V_H^i &= 
        \frac{\alpha}{2} \Big( \log \abs{ Q_{\ui\ui}^i } - n_u \log (2\pi \alpha) \Big) \\
        &\quad + \alpha \sum_{j=1, j \not= i}^N
            \tr\left[ (Q^j_{u^j, u^j})^{-1} Q^i_{u^j, u^j} \right]
\end{split} \\
%
V_x^i
    &= Q_x^i + \vK\T Q_{uu}^i \vk + \vK\T Q_u^i + Q_{xu}^i \vk \\
%
V_{xx}^i
    &= Q_{xx}^i + \vK\T Q_{uu}^i \vK + \vK\T Q_{ux}^i + Q_{xu}^i \vK 
\end{align}
\end{proof}

\newpage
\section{Proof of \Cref{lemma:aug_lang_cvg}}
\label{sm:proof:lemma:aug_lang_cvg}
We restate \cref{lemma:aug_lang_cvg} below for convenience.
\begin{customlemma}{3}
Suppose that the augmented Lagrangian MELQGames converges to a tuple $(\pi^*, \lambda^i_j, \rho)$ for some $\rho > 0$ which satisfies dual feasibility of $\lambda$, \ac{LICQ} and \ac{SOSC}.
Then, $\pi^*$ is a \ac{GNE} for \eqref{eq:pf:gnep}.
\end{customlemma}
%
\begin{proof}
Since the solution satisfies \ac{LICQ}, \ac{SOSC} and $\lambda > 0$,
each $\piis$ is a local minima for its corresponding constrained optimal control problem \eqref{eq:pf:gnep}.
Thus, there does not exist any feasible control $\pi \in \Pi(\pinegis)$ within a neighborhood of $\piis$ that can achieve a lower cost.
Hence, $\pi^*$ satisfies \eqref{eq:pf:gne_nec} and is a \ac{GNE}.
\end{proof}

%%%%%%%%%%%%%%%%%%%%%%%%%%%%%%%%%%%%%%%%%%%%%%%%%%%%%%%%%
\newpage
\section{Proof of \Cref{thm:mme:common_belief}}
\label{sm:sec:proof:mme:common_belief}
Before proving \cref{thm:mme:common_belief}, we first introduce the following theorem from
\citet{maschler2013game} which presents a sufficient condition for an event $A$ to be common belief
among the players at a particular state of the world $\omega$:

\begin{theorem} \label{sm:thm:suff_cond_common_belief}
Let $N$ denote the number of agents in the game,
$\omega \in Y$ be a state of the world and $A \subseteq Y$ be an event satisfying the following two conditions:
\begin{align}
    p_i( A | \omega ) &= 1, \quad \forall i \in \{ 1, \dots, N \} \\
    p_i( A | \omega' ) &= 1, \quad \forall i \in \{ 1, \dots, N \},
    \; \forall \omega' \in A
\end{align}
Then, $A$ is common belief among the agents at $\omega$.
\end{theorem}

Using \cref{sm:thm:suff_cond_common_belief}, we can now prove 
\eqref{thm:mme:common_belief}, which we restate below for convenience.
\begin{customlemma}{4}
    Let $E = \{ \omega_{a,s} \}_{a=1}^A$ denote the event that the ego agent knows the correct mode with probability 1.
    Then, for all $i \not= 1$, $\omega \in B_i E$, i.e. all non-ego agents believe that the ego agent knows the mode perfectly.
    Furthermore, it is common belief that $B_i E_a$ for all $i \not = 1$.
    In particular, the ego agent (correctly) believes that all non-ego agents (incorrectly) believes that it knows the correct mode.
\end{customlemma}
% ------
\begin{proof}
First, note that at $\omega \in E$, all agents, including the ego agent, know the true mode.
Using the definition of the belief operator $B_i$ \eqref{eq:def:belief},
we have that for $i \not = 1$:
\begin{equation}
    B_i E = \{ \omega \in Y | p_i(E|\omega) = 1\} = Y.
\end{equation}
In other words, non-ego agent $i$ believes that $E$ will attains no matter what the true world state $\omega_*$ is.
Since $p_i(Y|\omega) = 1$ for any $\omega \in Y$, $B_i E = Y$ is thus common belief among the agents at all world states.
Hence, by the definition of common belief, we have that
\begin{equation}
  B_1 B_i E = B_1 Y = Y  
\end{equation}
i.e., the ego agent believes that the non-ego agents believe it knows the true mode.
On the other hand, we have that
\begin{align}
    B_1 E &= \{ \omega \in Y | p_1(E | \omega) = 1\} = E, \\
    B_1 E^\complement &= \{ \omega \in Y | p_1(E^\complement|\omega) = 1\}
    = E^\complement.
\end{align}
where $E^\complement$ denotes the complement of $E$ in $Y$.
In other words, when $\omega \in E^\complement$, the ego agent (correctly) believes
that it does not know the true mode while simultaneously knowing that the non-ego agents (incorrectly) believe that it knows the true mode.
\end{proof}

\FloatBarrier
\bibliographystyle{unsrtnat}
\bibliography{references}